\begin{document}

\title{A stroboscopic averaging algorithm for highly oscillatory delay problems}
\author{{\sc
\sc
J. M. Sanz-Serna\thanks{ Email: jmsanzserna@gmail.com}} \\[2pt]
Departamento de Matem\'{a}ticas, Universidad Carlos III de Madrid,\\ Avenida de la Universidad
30, E-28911 Legan\'{e}s (Madrid), Spain\\[6pt]
{\sc and}\\[6pt]
{\sc
Beibei Zhu\thanks{Corresponding author. Email: zhubeibei@lsec.cc.ac.cn}}\\[6pt]
LSEC, ICMSEC, Academy of Mathematics and Systems Science,\\ Chinese Academy of Sciences, Beijing 100190, China and\\
School of Mathematical Sciences, University of Chinese\\ Academy of Sciences, Beijing 100049, China }

\maketitle

\begin{abstract}{
We propose and analyze a heterogenous
multiscale method  for the efficient
integration of constant-delay differential
equations  subject to fast periodic forcing.
The stroboscopic averaging method (SAM)
suggested here may provide approximations with
\(\mathcal{O}(H^2+1/\Omega^2)\) errors with a
computational effort that grows like \(H^{-1}\)
(the inverse of the stepsize), uniformly in the
forcing frequency \(\Omega\). } {Delay
differential equations, stroboscopic averaging,
highly oscillatory problems }
\end{abstract}

\section{Introduction}
We propose and analyze a heterogenous multiscale method (HMM) \citep{E2003, EEngquist} for the efficient integration of   constant-delay differential equations  subject to fast periodic forcing. The stroboscopic averaging method (SAM) suggested here may provide approximations with \(\mathcal{O}(H^2+1/\Omega^2)\) errors with a computational effort that grows like \(H^{-1}\) (the inverse of the stepsize), uniformly in the forcing frequency \(\Omega\).

Delay differential equations with fast periodic forcing appear in a number of recent contributions to the nonlinear Physics literature.  As shown by \cite{Daza3}, under fast periodic forcing, the Delayed Action Oscillator (\cite{Boutle}) that describes El Ni\~{n}o phenomenon may generate basins of attraction with the Wada property, i.e.\ each point on the boundary of one of the basins is actually on the boundary of all basins. The system
\begin{eqnarray}\label{toggle}
\dot{x}_1(t)&=&\frac{\alpha}{1+x_2^{\beta}(t)}-x_1(t-\tau)+A\sin(\omega t)+B\sin(\Omega t),\\
\dot{x}_2(t)&=&\frac{\alpha}{1+x_1^{\beta}(t)}-x_2(t-\tau), \nonumber
\end{eqnarray}
 ($\tau$, $\alpha,\beta,\omega,A,B$ are constants) describes,  for \( A = 0\), \( B= 0\) a time-delayed genetic toggle switch,
 a synthetic gene-regulatory network \citep{Gardner}. Studied by \cite{Daza} is the phenomenon of {\em vibrational resonance} \citep{Landa}, i.e.\ the way in which  the presence of the high-frequency forcing \(B\sin(\Omega t)\) enhances the response of the system to the low-frequency forcing \(A\sin(\omega t)\).
\cite{Jee} and \cite{Daza2} investigate in a similar way  the forced, delayed Duffing oscillator. A new kind
of nonlinear resonance of periodically forced delay systems has recently been described by \cite{Coccolo}.

The numerical integration of highly oscillatory differential equations  with or without delay may be a very demanding task, as standard  methods typically have to employ timesteps smaller
 than the periods present in the solution. For systems without delay, the literature contains many suggestions
 of numerical schemes specially designed to operate in the highly-oscillatory scenario; many of them are reviewed
 in \cite{HLW}. Perhaps counterintuitively, some of those methodologies take advantage of the large frequency
 and their efficiency actually increases with \(\Omega\)
\citep{arieh}. On the other hand, schemes for
highly oscillatory problems may suffer from
unexpected instabilities and inaccuracies
\citep{CS1}.

The algorithm suggested here is based on ideas presented, for systems without delay, by \cite{CCMS1} and
\cite{CCMS2}. In these references, given an oscillatory problem, a {\em stroboscopically averaged} problem is
introduced such that, at the stroboscopic times \(t^{(k)}=kT\), \(T=2\pi/\Omega\), \( k = 0, 1,\dots\), its
solution \(X(t)\) (approximately)
 coincides with the true oscillatory
 solution \(x\). The stroboscopically averaged problem does not
 include  rapidly varying forcing terms and therefore, if available in closed form,  may be integrated numerically
  without much ado. The algorithms in \cite{CCMS1} and \cite{CCMS2} compute numerically values of \(X\), without
   demanding that the user finds analytically the expression of the averaged system. More precisely, the algorithms
    only require evaluations of the right-hand side of the originally given oscillatory problem. The solution \(X\)
     is advanced  with a standard integrator (the macro-integrator) with a stepsize
\(H\) that, for a target accuracy, may be chosen to be independent of \(\Omega\). When
 the macro-integrator requires a value \(F\) of the slope \(\dot X\), \(F\) is found by numerical
 differentiation of a micro-solution \(u\), i.e.\ a solution of the originally given oscillatory problem. While
  the micro-integrations to find \(u\) are performed with stepsizes \(h\) that are
  submultiples of the (small) period \(T\), the corresponding computational cost does not increase as \(\Omega\rightarrow \infty\), because \(u\) is only required in windows of width \(mT\), \(m\) a small integer.

The extension of the material in \cite{CCMS1} and
\cite{CCMS2} to systems with delay is far
from trivial. A first difficulty stems from the
well-known fact that, in the delay scenario,
regardless of the smoothness of the equation,
solutions may be non-smooth at points \(t\) that
are integer multiples of the (constant) delay.
 Therefore, the
algorithm presented here has  to make
special provision for that lack of smoothness. In
addition, the analysis of the algorithm (but, as
emphasized above, not the algorithm itself) is
built on the knowledge of the stroboscopically
averaged systems. While the construction of a
stroboscopically averaged system with errors
\(x(t^{(k)})- X(t^{(k)})=\mathcal{O}(1/\Omega)\)
is not difficult, here we aim at errors
\(x(t^{(k)})-
X(t^{(k)})=\mathcal{O}(1/\Omega^2)\) and this
requires much additional analysis. The classical
reference \cite{Lehman}  only considers
zero-mean, \(\mathcal{O}(1/\Omega)\) averaging.

In Section~\ref{sec:SAM} we present the main ideas
of the paper and
  a detailed description of
the algorithm.  Due to the difficulties imposed by the lack of smoothness in the solution, the
algorithm uses low-order methods: the second-order Adams-Bashforth formula as a macro-integrator and Euler's
rule as a micro-integrator.  Section~\ref{sec:averaged} contains the construction of the stroboscopically
averaged system with \(\mathcal{O}(1/\Omega^2)\) accuracy. Sections~\ref{sec:error1} and \ref{sec:error2} are
devoted to the analysis of the SAM algorithm. In the first of these, we assume that the micro-integrations
carried out in the algorithm are performed exactly. Under suitable hypotheses, the errors in SAM are
\(\mathcal{O}(H^2+1/\Omega^2)\). The effect of the errors in the micro-integration is studied in
Section~\ref{sec:error2}: it is shown that, with a computational cost that grows like \(1/H\), SAM may yield
errors of size \(\mathcal{O}(H^2+1/\Omega^2)\). The \(H^2\) (second order) behaviour of the error may come as
a surprise,  because micro-integrations are performed by Euler's rule; of key importance here is a
superconvergence result (see the bound in \eqref{eq:auxeulerbis}) for the Euler solution of oscillatory
problems  when the integration is carried out over a whole number of periods. The last two sections report
numerical experiments that,
 on the one hand, confirm the theoretical expectations and, on the other, show the
  advantage of SAM when compared with a direct numerical integration of the oscillatory problem.
  Speed-ups larger than three order of magnitude are reported at the end of Section~\ref{sec:extensions}.

\section{The stroboscopic averaging method (SAM)}
\label{sec:SAM}

This section  motivates and describes the SAM algorithm.

\subsection{Motivation}

We consider highly oscillatory delay differential systems of the form
\begin{eqnarray}\label{gensys}
\dot{x}(t)&=&f(x(t),y(t),t,\Omega t;\Omega),\qquad t\ge 0,\\
y(t)&=&x(t-\tau),\qquad t\ge 0, \nonumber
\end{eqnarray}
where the solution $x$ is defined for $t \ge -\tau$ and takes values in $\mathbb{R}^D$, the function
$f(x,y,t,\theta;\Omega):\mathbb{R}^D\times \mathbb{R}^D\times \mathbb{R}\times \mathbb{R}\times \mathbb{R}
\rightarrow \mathbb{R}^D$ depends $2\pi$--periodically on its fourth argument $\theta$, $\tau>0 $ is the
(constant)  delay, and the frequency $\Omega$ is seen as a parameter, $\Omega \gg 1$. Note that \(f\) depends
explicitly on \(t\) through its third and fourth arguments; the fourth is the {\em rapidly rotating phase}
\(\theta =\Omega t\) and the third corresponds to a {\em slow} (i.e.\ \(\Omega\)-independent) dependence on
\(t\) (see the toggle-switch equations above). The values of $x$ on the interval $[-\tau,0]$ are prescribed
through an {\em $\Omega$--independent}\footnote{Assuming that \(\varphi\) does not depend on \(\Omega\)
implies no loss of generality, as the general case may be reduced to the \(\Omega\)-independent case by
introducing a new dependent variable \(x(t)-\Phi(t)\), where \(\Phi(t)\) coincides with \(\varphi(t)\) for
\(-\tau\leq t\leq 0\). }
 function $\varphi$:
\begin{equation}\label{eq:varphi}
 x(t)=\varphi(t),\qquad-\tau\le t\le 0.
\end{equation}

It is well known that, regardless of the smoothness of \( f\) and \(\varphi\), the function \(x(t)\) will typically not be differentiable at \(t=0\) and that in \eqref{gensys} \( \dot{x}(0)\)
has to be understood as a right derivative. Furthermore the discontinuity of \(\dot{x}(t)\) at \( t = 0\)  will lead to the discontinuity of \( \ddot{x}(t)\) at $t=\tau$, etc.

We assume that,  at the {\em stroboscopic times} $t^{(k)}=kT$,
where \( T =2\pi/\Omega\) is the period and $k = 0, 1,\dots$,
the solution \(x(t)\) of the oscillatory delay problem \eqref{gensys}--\eqref{eq:varphi} may be approximated (in a sense to be made precise later) by the solution \(X(t)\) of an averaged problem
\begin{eqnarray}\label{genaver}
\dot{X}(t)&=&F,\qquad t\ge 0,\\
X(t)&=&\varphi(t),\qquad -\tau\le t\le 0,\nonumber
\end{eqnarray}
where the value of the function \(F\) may depend on \(X(t)\), on the history \(X(s)\), \(-\tau\leq s<t\),
on the slow time \(t\) and on \(\Omega\) but is {\em independent} of the fastly varying phase \(\theta=\Omega t\).
\begin{figure}[t!]
\vspace{-5cm}\centering\includegraphics[scale=0.45]{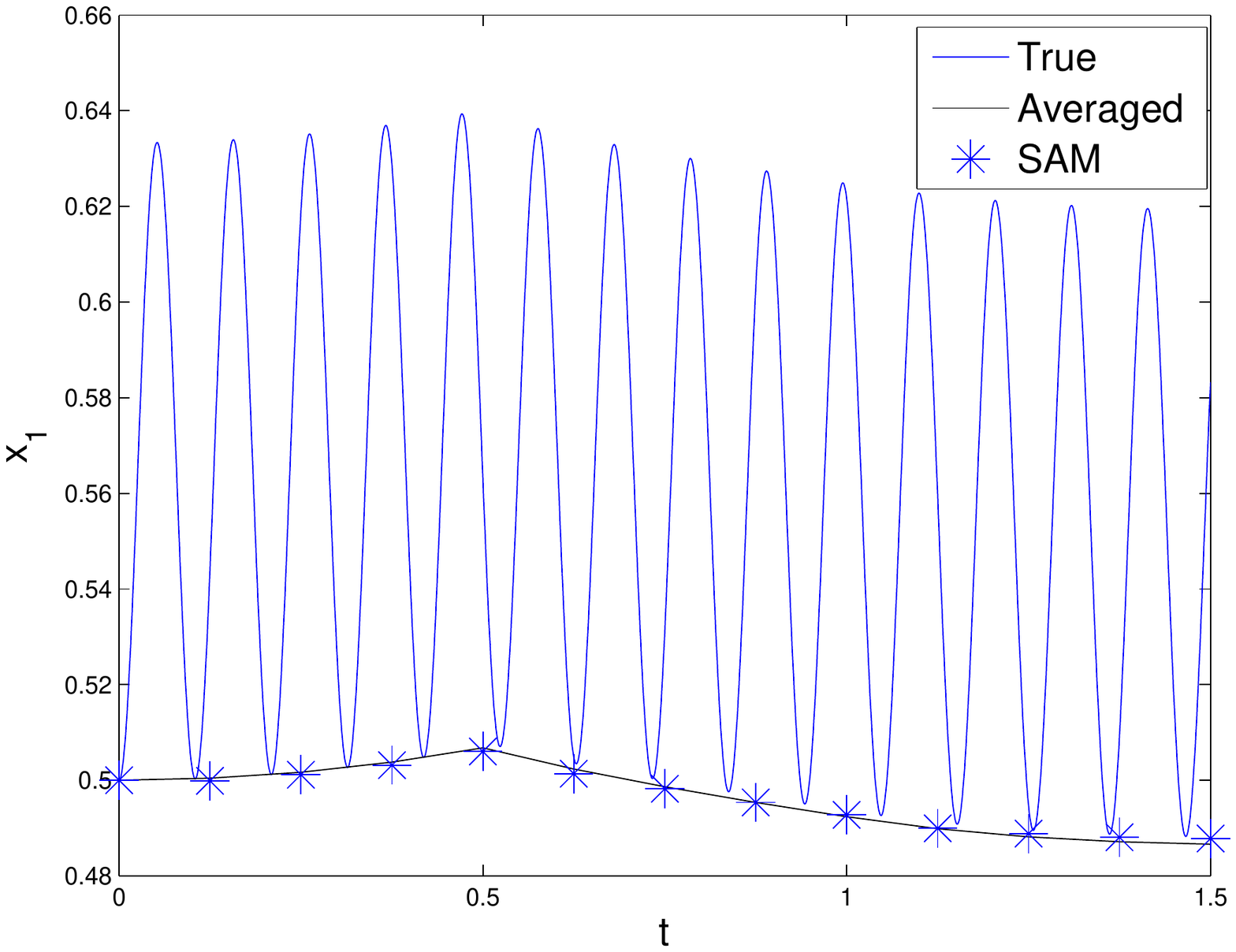}
\vspace{-3cm}
\caption{ $x_1$ component of the toggle switch problem. The constants $\tau$, $\alpha$, $\beta$, $A$, $\omega$, $B$,
 and the function \(\varphi\) are chosen as in Section~\ref{sec:numer} and $\Omega=60$.
The true solution \(x\)  and the averaged solution \(X\) are very close at the stroboscopic times \( t^{(k)} = k(2\pi/\Omega)=kT\) , \( k = 0, 1,\dots\) SAM is used to generate approximations to \( X\) at the step points \(t_n=nH\), \(H=0.125\), \( n=0, 1,\dots\) The value of \(H\) is chosen so that the point \(t=\tau\), where the slope of the solution is discontinuous, is a step point. On the other hand, with the value of \(\Omega\) considered, the step points (abscissae of the stars) are {\em not} stroboscopic times.}
\label{fig:toggle}
\end{figure}

 We illustrate the situation in the particular case of the toggle-switch problem \eqref{toggle}. Fig.~\ref{fig:toggle} displays, in  a short time interval, a solution of the given oscillatory system and the corresponding averaged solution, found by solving the stroboscopically averaged system
\begin{eqnarray}\label{vraver}
\dot{X}_1(t)&=&\frac{\alpha}{1+X_2^{\beta}(t)}-Y_1(t)-\frac{B}{\Omega}1_{\{t\geq \tau\}}(t)+A\sin(\omega t),\\ \nonumber
\dot{X}_2(t)&=&\frac{\alpha}{1+X_1^{\beta}(t)}-Y_2(t)-\frac{B}{\Omega}\frac{\alpha \beta X_1^{\beta-1}(t)}{(1+X_1^{\beta}(t))^2},
\end{eqnarray}
where \( Y_i(t) = X_i(t-\tau)\) and  \(1_{\{t\geq \tau\}}(t)\) is the (indicator) function that takes the
value 1 for \(t\geq \tau\) and vanishes for \(t<\tau\) (see the next section for the derivation of this
averaged system). Note that the slow time-dependent forcing \(A\sin(\omega t)\) has not been averaged out and
that, due to the presence of \(1_{\{t\geq \tau\}}(t)\),  the right-hand side of the averaged system is
discontinuous at \( t=\tau\) (this discontinuity manifests itself  in Fig.~\ref{fig:toggle} through a
discontinuity in the slope of \(X\),
not to be confused with the discontinuity  at \(t=0\)
typically present, as mentioned above, in solutions of delay problems). The notion of stroboscopic averaging
studied in detail in  \cite{Murua}
 is far from new. However standard treatments of the theory of averaging favour
 alternative techniques, specially the zero-mean approach where the functions
 necessary to express the required change of variables are chosen so as to have zero mean over
 one period of the oscillation. In stroboscopic averaging the  freedom available in the choice of change of variables is used
 to
 impose that  the old and new variables coincide at stroboscopic times. This is advantageous for the numerical
 methods studied here. The zero-mean approach may be better for analytic purposes
 at it usually leads to simpler high-order averaged systems. If zero-mean averaging
 had been used in Fig.~\ref{fig:toggle}, the averaged solution would had been located
 halfway between the maxima and the minima of the oscillatory solution.

The study of the vibrational resonance of \eqref{toggle} requires to simulate over long time intervals (the interval \(0\leq t\leq 300\) is used in \cite{Daza}) for many choices of the values
of the constants $\alpha,\beta,\omega,A,B,\tau$ and the parameter \(\Omega\); the presence of the fast-frequency oscillations makes such a task costly.
It is then of interest to simulate, if possible,  averaged systems like \eqref{genaver} rather than  highly-oscillatory models like \eqref{gensys}. However, obtaining
$F$ analytically may be difficult or even impossible and we wish to have a numerical method that approximates $X$ by using only $f$. SAM is such a technique.

The idea behind SAM is as follows. Let \(x\) and \(X\) be respectively the oscillatory and averaged solutions corresponding to the same \(\varphi\). At fixed \(t\),   \(\dot x\) and \(\dot X\) may differ substantially. However  difference quotients such as
\((x(t+T)-x(t))/T\) or \((x(t+T)-x(t-T))/(2T)\) may provide a good approximation to the slope \(\dot X(t)\) (see Fig.~\ref{fig:toggle}).
As other heterogeneous multiscale methods (see e.g.\ \cite{E2003,EEngquist,E2007,Engquist,Li, Ariel,Sanz-Sernamod,CS2}), the
algorithm includes macro-integrations and micro-integrations. Macro-integations are used to advance \(X\) over macro-steps
of length \(H\) larger than the period \(T\). The necessary slopes \(\dot X\) are obtained by forming difference quotients of auxiliary oscillatory solutions found by micro-integrations with small steps \(h\).

\subsection{The algorithm}

Let us now describe the algorithm.

 \subsubsection{Macro-integration}  Choose a positive integer $N$  and define the macro-stepsize  $H=\tau/N$. If the solution is sought in an interval \( 0\leq t\leq t_{max}\), SAM generates approximations $X_n$ to $X(t_n)$, $t_n=nH$,
 \(n=0,1,\dots, \lfloor t_{max}/H \rfloor\)
 by using the second-order Adams-Bashforth formula (macro-integrator)
\begin{equation}\label{ab2}
X_{n+1}=X_n+\frac{3}{2}HF_n-\frac{1}{2}HF_{n-1},
\end{equation}
starting from $X_0=x(0)=X(0)=\varphi(0)$; here $F_n$ is an approximation to $\dot{X}(t_n)$ obtained by numerical differentiation of the micro-solution. The formula is {\em not} used
if $n= 0$ and $n= N$, where it would be inconsistent in view of the jump discontinuities of \(\dot X\) at \( t=0\) and \(t=\tau\) noted above.
For $n=0$ and $n=N$ we use Euler's rule, i.e.\ we set
\begin{equation}\label{eq:Eulermacro}
X_1=X_0+HF_0,\qquad X_{N+1}=X_N+HF_N.
\end{equation}

\subsubsection{Micro-integration}

If $\nu_{max}$ is a positive integer, the micro-stepsize $h$ is chosen to be $T/\nu_{max}$ (recall that \(T=2\pi/\Omega\) denotes the period). We use  Euler's rule, starting from $u_{n,0}=X_n$, first to integrate forward
the oscillatory problem (\ref{gensys})  over one period
\begin{equation}\label{forward}
u_{n,\nu+1}=u_{n,\nu}+hf(u_{n,\nu},v_{n,\nu},t_n+\nu h,\Omega\nu h;\Omega),~~~\nu=0,1,\dots,\nu_{max}-1,
\end{equation}
and then to integrate backward over one period
\begin{equation}\label{backward}
u_{n,-\nu-1}=u_{n,-\nu}-hf(u_{n,-\nu},v_{n,-\nu},t_n-\nu h,-\Omega\nu h;\Omega),~~~\nu=0,1,\dots,\nu_{max}-1.
\end{equation}
 Here $v$ denotes the past values  for \(u\) given by \(v_{n,\nu} =u_{n-N,\nu}\) if \(n>N\), and $v_{n,\nu}=\varphi(-\tau+nH+\nu h)$ for $n<N$ (if \(n = N\) \(v_{N,\nu} =u_{0,\nu}\) for \(\nu\geq 0\) and
 $v_{N,\nu}=\varphi(\nu h)$ for \(\nu<0\)).

It is {\em crucial} to observe the values \(\Omega\nu h\) and
\(-\Omega\nu h\) used for the fast argument \(\theta\) in \eqref{forward} and
\eqref{backward} respectively;  each micro-integration  starts from \(\theta = 0\) rather than
from the value \(\theta = \Omega t_n\) which may perhaps have been expected.
The reason for this is that in stroboscopic averaging, the resulting averaged system changes with the initial
value of the phase \(\theta\); to work with one and the same averaged system micro-integrations have to start from the
value \(\theta= 0\) that the phase takes at the initial point of the interval \([0,t_{max}]\) (see
\cite{CCMS1}, \cite{CCMS2} for a detailed discussion).

 The slopes $F_n$ to be used in \eqref{ab2} or \eqref{eq:Eulermacro} are given by the central difference formula,
\begin{equation}\label{eq:slope1}
F_n=\frac{u_{n,\nu_{max}}-u_{n,-\nu_{max}}}{2T}
\end{equation}
if $n\neq 0$ and $n\neq N$, while for $n=0$ and $n=N$, we use the forward difference formula
\begin{equation}\label{eq:slope2}
F_0=\frac{u_{0,\nu_{max}}-u_{0,0}}{T},\qquad
F_N=\frac{u_{N,\nu_{max}}-u_{N,0}}{T},
\end{equation}
due to the discontinuity of \(\dot{X}(t)\) at \(t=0\) and \(t=\tau\).
A detailed description  of the algorithm is provided  in Table \ref{tab:algorithm}.

Fig.~\ref{figschematic} may help to better understand the procedure. The upper time axis corresponds to the macro-integration; all the information needed to obtain $X_{n+1}$ is $X_n$, $F_n$ and $F_{n-1}$ (\(F_{n-1}\) is actually not required for \(n=0,N\)).
The value of $F_n$ is derived by numerical differentiation of the micro-solution and passed to the macro-integrator to compute \(X_{n+1}\). For fixed \(n\), the computation of the Euler micro-solution
\( u_{n,\nu}\) uses   the past values \(v_{n,\nu}\) and the initial datum \(u_{n,0} = X_n\) (note that \(X_n\) is the most recent vector found in the macro-integration).

SAM only operates with macro-stepsizes \(H\) that are submultiples of the delay \(\tau\); this restriction is
imposed to enforce that \(t=\tau\) be a step-point to better deal with the discontinuity in slope at
\(t=\tau\) (see Fig.~\ref{fig:toggle}). In general the quotient \(\tau/T\) will not be a whole number and then
the step points \(t_n\) will not be stroboscopic times; this is the case in the simulation in
Fig.~\ref{fig:toggle}.

We  emphasize that, given \(N\) and \(\nu_{max}\), the {\em complexity} of the algorithm is {\em independent} of \(\Omega\). When \(\Omega\) increases, the micro-stepsize \(h\) decreases to cater for the more rapid variation of the oscillations but  the window of width \(2T\) (or \(T\)) for each micro-integration becomes correspondingly narrower.

Finally we point out that we have tested several
alternative algorithms. For instance we
alternatively performed the micro-integrations
with the Adams-Bashforth second order method, or
we used second-order forward approximations for
\(F_0\), \(F_N\). While those modifications
improve the accuracy of the results for small
stepsizes, experiments reveal that they are not
always beneficial for large stepsizes; therefore
we shall not be concerned with them here.

\begin{table}
\caption{SAM Algorithm}
\vspace{-2mm}
\begin{center}
\begin{tabular}{lcccc}
\hline
\texttt{Compute $X_1$}\\
~~~~ \texttt{Micro-integration}\\
~~~~~~~~~~~$u_{0,0} = \varphi(0)$ \% \texttt{initial value}\\
~~~~~~~~~~~For $\nu=0:\nu_{max}$, $v_{0,\nu}=\varphi(-\tau+\nu h)$, end \% \texttt{history}\\
~~~~~~~~~~~For $\nu=0:\nu_{max}-1$, $u_{0,\nu+1}=u_{0,\nu}+hf(u_{0,\nu},v_{0,\nu},t_0+\nu h,\Omega\nu h;\Omega)$, end \% \texttt{Euler}\\
~~~~~~~~~~~$F_0=(u_{0,\nu_{max}}-u_{0,0})/T$ \% \texttt{slope for macro-step}\\
~~~~ \texttt{Macro-integration}\\
~~~~~~~~~~~$t_0 = 0$, $X_0=\varphi(0)$, $t_1=t_0+H$, $X_1=X_0+HF_0$ \% \texttt{Euler macro at n=0}\\
\texttt{Compute $X_2,\dots,X_N$}\\
For $n=1:N-1$\\
~~~~ \texttt{Micro-integration}\\
~~~~~~~~~~~$u_{n,0}=X_n$ \% \texttt{initial value}\\
~~~~~~~~~~~For $\nu=-\nu_{max}:\nu_{max}$, $v_{n,\nu}=\varphi(-\tau+nH+\nu h)$, end \% \texttt{history}\\
~~~~~~~~~~~For $\nu=0:\nu_{max}-1$, $u_{n,\nu+1}=u_{n,\nu}+hf(u_{n,\nu},v_{n,\nu},t_n+\nu h,\Omega\nu h;\Omega)$\\
~~~~~~~~~~~$u_{n,-\nu-1}=u_{n,-\nu}-hf(u_{n,-\nu},v_{n,-\nu},t_n-\nu h,-\Omega\nu h;\Omega)$, end \% \texttt{Euler}\\
~~~~~~~~~~~$F_n=(u_{n,\nu_{max}}-u_{n,-\nu_{max}})/(2T)$ \% \texttt{slope for macro-step}\\
~~~~ \texttt{Macro-integration}\\
~~~~~~~~~~~$t_{n+1}=t_n+H$, $X_{n+1}=X_n+(3H/2)F_n-(H/2)F_{n-1}$ \% \texttt{Adams-Bashforth2 macro}\\
end\\
\texttt{Compute $X_{N+1}$}\\
~~~~ \texttt{Micro-integration}\\
~~~~~~~~~~~$u_{N,0}=X_N$ \% \texttt{initial value}\\
~~~~~~~~~~~For $\nu=1:\nu_{max}$, $u_{0,-\nu}=\varphi(-\nu h)$, end \% \texttt{values taken from history}\\
~~~~~~~~~~~For $\nu=-\nu_{max}:\nu_{max}$, $v_{N,\nu}=u_{0,\nu}$, end \% \texttt{save for later history}\\
~~~~~~~~~~~For $\nu=0:\nu_{max}-1$, $u_{N,\nu+1}=u_{N,\nu}+hf(u_{N,\nu},v_{N,\nu},t_N+\nu h,\Omega\nu h;\Omega)$\\
~~~~~~~~~~~$u_{N,-\nu-1}=u_{N,-\nu}-hf(u_{N,-\nu},v_{N,-\nu},t_N-\nu h,-\Omega\nu h;\Omega)$, end \% \texttt{Euler}\\
~~~~~~~~~~~$F_N=(u_{N,\nu_{max}}-u_{N,0})/T$ \% \texttt{slope for macro-step}\\
~~~~ \texttt{Macro-integration}\\
~~~~~~~~~~~$t_{N+1}=t_N+H$, $X_{N+1}=X_N+HF_N$ \% \texttt{Euler macro at n=N}\\
\texttt{Compute $X_{N+2},\dots$}\\
For $n=N+1:\lfloor t_{max}/H \rfloor$\\
~~~~ \texttt{Micro-integration}\\
~~~~~~~~~~~$u_{n,0}=X_n$ \% \texttt{initial value}\\
~~~~~~~~~~~For $\nu=-\nu_{max}:\nu_{max}$, $v_{n,\nu}=u_{n-N,\nu}$, end \% \texttt{save for later history}\\
~~~~~~~~~~~For $\nu=0:\nu_{max}-1$, $u_{n,\nu+1}=u_{n,\nu}+hf(u_{n,\nu},v_{n,\nu},t_n+\nu h,\Omega\nu h;\Omega)$\\
~~~~~~~~~~~$u_{n,-\nu-1}=u_{n,-\nu}-hf(u_{n,-\nu},v_{n,-\nu},t_n-\nu h,-\Omega\nu h;\Omega)$, end \% \texttt{Euler}\\
~~~~~~~~~~~$F_n=(u_{n,\nu_{max}}-u_{n,-\nu_{max}})/(2T)$ \% \texttt{slope for macro-step}\\
~~~~ \texttt{Macro-integration}\\
~~~~~~~~~~~$t_{n+1}=t_n+H$, $X_{n+1}=X_n+(3H/2)F_n-(H/2)F_{n-1}$  \% \texttt{Adams-Bashforth2 macro}\\
end\\
\hline
\end{tabular}
\end{center}
\label{tab:algorithm}
\end{table}

\begin{figure}
\begin{center}
\setlength{\unitlength}{1.5pt}
\begin{picture}(220,75)
\put(10,18){\vector(1,0){190}} \put(10,58){\vector(1,0){190}}
\multiput(120,58)(30,0){3}{\circle*{2}}
\put(140,16.5){\line(0,1){3}}
\put(150,18){\circle*{2}}
\put(160,16.5){\line(0,1){3}}
\multiput(142,17.1)(2,0){4}{\line(0,1){1.8}}
\multiput(152,17.1)(2,0){4}{\line(0,1){1.8}}
\put(149,12){$t_n$}\put(149,52){$t_n$}
\put(177,52){$t_{n+1}$}\put(117,52){$t_{n-1}$}
\put(150,8){\vector(-1,0){10}}\put(150,8){\vector(1,0){10}}
\put(146,2){$2T$}
\put(20,16.5){\line(0,1){3}}
\put(30,18){\circle*{2}}
\put(40,16.5){\line(0,1){3}}
\multiput(22,17.1)(2,0){4}{\line(0,1){1.8}}
\multiput(32,17.1)(2,0){4}{\line(0,1){1.8}}
\put(22,12){$t_{n}-\tau$}
\put(30,8){\vector(-1,0){10}}\put(30,8){\vector(1,0){8}}
\put(25,2){$2T$}
\put(148,62){$F_n$}\put(116,62){$F_{n-1}$}
\put(148,72){$X_n$}\put(176,72){$X_{n+1}$}
\put(147,22){$u_n$}\put(27,22){$v_n$}
\put(197,52){${t}$}\put(197,12){$t$}
\put(205,57){$macro$}\put(205,17){$micro$}
\put(144,54){\vector(0,-1){33}}
\put(156,21){\vector(0,1){33}}
\put(135,37){$X_n$}
\put(159,37){$F_n$}
\end{picture}
\end{center}
\caption{Schematic description of SAM. Once \(X_n\) is available, it is passed to the micro-integrator to find \(u_{n,\nu}\) for varying \(\nu\). Numerical differentiation of the micro-solution yields \(F_n\) which is used in the macro-stepping to compute \(X_{n+1}\).}
\label{figschematic}
\end{figure}

\section{The averaged system}
\label{sec:averaged}
In this section and the three that follow, we assume that the system \eqref{gensys} is of the particular form
\begin{equation}\label{specficsys}
\dot{x}(t)=f(x(t),y(t),\Omega t).
\end{equation}
When comparing with the general format in \eqref{gensys} we note that now \(f(x,y,\theta)\) has three
arguments rather than five. The case where \(f\) includes a slow explicit dependence on \(t\), i.e.\ \(f =
f(x,y,t,\theta)\), may be trivially reduced to \eqref{specficsys} by adding a component \(x_{D+1}\) to the
state vector \(x\in\mathbb{R}^D\) and setting \(\dot{x}_{D+1} = 1\). The case where \(f\) depends on
\(\Omega\), i.e.\ \(f = f(x,y,\theta;\Omega)\), is taken up in the final section. We assume that
$f(x,y,\theta)$   and the initial function \(\varphi\) in \eqref{eq:varphi} are of class
 \(C^3\), and that the solution \(x\) exists in the interval \([0,t_{max}]\) where \(t_{max}\) is a constant (i.e.\ does not change with the parameter \(\Omega\)).

Using an approach similar to that in \cite{Chartier}, \cite{Murua}, \cite{Sanz-Serna}, we use a Fourier decomposition
\[
f(x,y,\theta)=\sum_{k\in \mathbb{Z}}\exp(ik\theta)f_k(x,y).
\]
The coefficients $f_k(x,y)$ satisfy $f_k\equiv f_{-k}^*$ because the problem is real.

It is easily seen that, under the preceding hypotheses, $x$ undergoes oscillations of frequency $\Omega$ and
amplitude $\mathcal{O}(1/\Omega)$ as \(\Omega \rightarrow \infty\). To reduce the amplitude of the
oscillations to $\mathcal{O}(1/\Omega^{2})$, we consider the  near identity change of variables
\begin{equation}\label{varchange}
\left\{
\begin{aligned}
x&=X+\frac{1}{\Omega}\sum_{k\neq 0}\frac{\exp(ik\Omega t)-1}{ik}f_k(X,Y),~~~t \geq 0,\\
x&=X,~~~~~~~~~~~~~~~~~~~~~~~~~~~~~~~~~-\tau\le t<0,
\end{aligned}
\right.
,
\end{equation}
where we note that $x$ and $X$ coincide at stroboscopic times i.e.\ the change is stroboscopic.
In what follows, we use the notation $z(t)=y(t-\tau),~t\ge \tau$, $Y(t) = X(t-\tau), t\ge 0$,
$Z(t)=Y(t-\tau),~t\ge \tau$.

The proof of the following result is a straightforward but very lengthy exercise on Taylor expansion.

\begin{lemma}{\em
The change of variables (\ref{varchange}) transforms the system (\ref{specficsys}) into
\begin{eqnarray}
\dot{X}&=&f_0-\frac{1}{\Omega}\sum_{k\neq 0}\frac{\exp(ik\Omega t)-1}{ik}\frac{\partial f_k}{\partial X}f_0
-\frac{1}{\Omega}\sum_{k\neq 0}\frac{\exp(ik\Omega t)-1}{ik}\frac{\partial f_k}{\partial Y}\dot{\varphi}(t-\tau)\nonumber
\\
& &\qquad+
\frac{1}{\Omega}\sum_{k\neq 0}\frac{\exp(ik\Omega t)-1}{ik}\frac{\partial f_0}{\partial X}f_k
+\frac{1}{\Omega}\sum_{k,\ell\neq 0}\exp(ik\Omega t)\frac{\exp(i\ell\Omega t)-1}{i\ell}\frac{\partial f_k}{\partial X}f_{\ell}
\nonumber
\\
&&\qquad
+\mathcal{O}(\frac{1}{\Omega^2}),
\label{eq:l1}
\end{eqnarray}
for $0\le t< \tau$, and into
\begin{eqnarray}
\dot{X}&=&f_0
-\frac{1}{\Omega}\sum_{k\neq 0}\frac{\exp(ik\Omega t)-1}{ik}\frac{\partial f_k}{\partial X}f_0
-\frac{1}{\Omega}\sum_{k\neq 0}\frac{\exp(ik\Omega t)-1}{ik}\frac{\partial f_k}{\partial Y}f_{0\tau}\nonumber\\
& &\qquad
+\frac{1}{\Omega}\sum_{k\neq 0}\frac{\exp(ik\Omega t)-1}{ik}\frac{\partial f_0}{\partial X}f_k
+\frac{1}{\Omega}\sum_{k\neq 0}\frac{\exp(ik\Omega (t-\tau))-1}{ik}\frac{\partial f_0}{\partial Y}f_{k\tau}\nonumber
\\
& &\qquad+
\frac{1}{\Omega}
\sum_{k,\ell\neq 0}\exp(ik\Omega t)\frac{\exp(i\ell\Omega t)-1}{i\ell}
\frac{\partial f_k}{\partial X}f_{\ell}
\nonumber\\
& &\qquad
+\frac{1}{\Omega}\sum_{k,\ell\neq 0}\exp(ik\Omega t)\frac{\exp(i\ell\Omega
(t-\tau))-1}{i\ell}\frac{\partial f_k}{\partial Y}f_{\ell\tau}
 +\mathcal{O}(\frac{1}{\Omega^2}), \label{eq:l2}
\end{eqnarray}
 for \(t \geq \tau\).
Here  \[ f_k=f_k(X,Y),\qquad f_{k\tau}=f_{k}(Y,Z), \qquad k\in \mathbb{Z}.
\]
}
\end{lemma}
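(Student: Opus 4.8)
The plan is to substitute the change of variables \eqref{varchange} into \eqref{specficsys} and expand everything to first order in $1/\Omega$. Write the perturbation as $W(X,Y,t)=\frac{1}{\Omega}\sum_{k\neq 0}\frac{\exp(ik\Omega t)-1}{ik}f_k(X,Y)$, so that for $t\ge 0$ the change reads $x=X+W$ with $W=\mathcal{O}(1/\Omega)$. Differentiating in $t$ and using $\dot Y=\dot X(t-\tau)$ gives
\[
\dot x=\Big(I+\frac{\partial W}{\partial X}\Big)\dot X+\frac{\partial W}{\partial Y}\dot Y+\frac{\partial W}{\partial t}.
\]
The decisive observation is that the phase factor was chosen precisely so that $\frac{\partial W}{\partial t}=\sum_{k\neq 0}\exp(ik\Omega t)f_k(X,Y)=f(X,Y,\Omega t)-f_0(X,Y)$; this term will cancel the purely oscillatory part of the right-hand side $f(x,y,\Omega t)$ once the latter is expanded about $(X,Y)$.

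First I would express the delayed argument $y(t)=x(t-\tau)$ in the new variables, and here the two regimes of the lemma appear. For $0\le t<\tau$ the argument $t-\tau$ lies in $[-\tau,0)$, where \eqref{varchange} is the identity, so $y(t)=\varphi(t-\tau)=Y(t)$ exactly and $\dot Y=\dot\varphi(t-\tau)$; this yields \eqref{eq:l1}. For $t\ge\tau$ one instead has $y(t)=Y(t)+W_\tau$, where $W_\tau=\frac{1}{\Omega}\sum_{k\neq 0}\frac{\exp(ik\Omega(t-\tau))-1}{ik}f_{k\tau}$ carries the shifted phase $\Omega(t-\tau)$ and the shifted coefficients $f_{k\tau}=f_k(Y,Z)$; this extra term is the origin of the two additional sums in \eqref{eq:l2}.

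Next I would Taylor expand $f(x,y,\Omega t)=\sum_k\exp(ik\Omega t)f_k(X+W,y)$ to first order in the $\mathcal{O}(1/\Omega)$ quantities $W$ (and, in the second regime, $W_\tau$), discarding $\mathcal{O}(1/\Omega^2)$ remainders. Using the cancellation identity above removes $f(X,Y,\Omega t)$ and leaves $f_0$ plus the linear corrections $\sum_k\exp(ik\Omega t)\frac{\partial f_k}{\partial X}W$ and $\sum_k\exp(ik\Omega t)\frac{\partial f_k}{\partial Y}W_\tau$. Splitting each of these into its $k=0$ and $k\neq 0$ parts and relabelling indices produces exactly the $\frac{\partial f_0}{\partial X}f_k$, the $\frac{\partial f_0}{\partial Y}f_{k\tau}$, and the two double sums $\frac{\partial f_k}{\partial X}f_\ell$ and $\frac{\partial f_k}{\partial Y}f_{\ell\tau}$ appearing in \eqref{eq:l1}--\eqref{eq:l2}. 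To finish I would move the factor $I+\partial W/\partial X$ to the right via $(I+\partial W/\partial X)^{-1}=I-\partial W/\partial X+\mathcal{O}(1/\Omega^2)$; its only surviving contribution at order $1/\Omega$ is $-\frac{\partial W}{\partial X}f_0$, the $\frac{\partial f_k}{\partial X}f_0$ term. Finally, in the term $-\frac{\partial W}{\partial Y}\dot Y$ I would replace $\dot Y$ by its leading-order value: $\dot\varphi(t-\tau)$ when $t<\tau$, and $\dot X(t-\tau)=f_0(Y,Z)+\mathcal{O}(1/\Omega)=f_{0\tau}+\mathcal{O}(1/\Omega)$ when $t\ge\tau$, the $\mathcal{O}(1/\Omega)$ error being harmless since it is multiplied by $\partial W/\partial Y=\mathcal{O}(1/\Omega)$.

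The bookkeeping is lengthy but routine; the genuinely delicate points are, first, getting the delayed term right---it is the splitting of $y$ into the two cases that distinguishes \eqref{eq:l1} from \eqref{eq:l2}---and, second, the bootstrap step that replaces $\dot X(t-\tau)$ by $f_{0\tau}$ to leading order. The $C^3$ hypothesis on $f$ and $\varphi$ is what makes all of this legitimate: it guarantees that the Fourier coefficients $f_k$ (and the derivatives $\partial f_k/\partial X$, $\partial f_k/\partial Y$) decay fast enough in $k$ for the series and their term-by-term derivatives to converge, and that the second-order Taylor remainders are bounded uniformly on the fixed interval $[0,t_{max}]$, so that every discarded term is genuinely $\mathcal{O}(1/\Omega^2)$ uniformly in $t$.
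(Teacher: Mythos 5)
Your proposal is correct and is precisely the computation the paper has in mind: the paper gives no details, stating only that the lemma is ``a straightforward but very lengthy exercise on Taylor expansion,'' and your derivation (the cancellation $\partial W/\partial t=f(X,Y,\Omega t)-f_0$, the case split on whether $t-\tau$ falls in the region where the change of variables is the identity, the inversion of $I+\partial W/\partial X$ to first order, and the leading-order replacement of $\dot Y$ by $\dot\varphi(t-\tau)$ or $f_{0\tau}$) reproduces every term of \eqref{eq:l1}--\eqref{eq:l2} with the correct signs and phase shifts. No gaps.
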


Note that system in \eqref{eq:l1}--\eqref{eq:l2} is of the (expected) form
\begin{equation}\label{eq:simple}
\dot X = f_0+\mathcal{O}(1/\Omega),
\end{equation}
where
\(f_0\) is the average
\begin{equation}\label{eq:integral}
 f_0(X,Y) = \frac{1}{2\pi}\int_0^{2\pi} f(X,Y,\theta)\,d\theta.
\end{equation}
By suppressing the remainder in \eqref{eq:simple}, we obtain the averaged system \(\dot X = f_0(X,Y)\)
with \(\mathcal{O}(1/\Omega)\) error; this is not sufficiently accurate for the values of \(\Omega\) of interest and
 we take the averaging procedure to higher order.
To do so, we perform an additional stroboscopic change of variables chosen so as to annihilate the oscillatory
parts of the $\mathcal{O}(1/\Omega)$ terms in \eqref{eq:l1}--\eqref{eq:l2}. Specifically we take:
\begin{eqnarray*}
X&=&\tilde{X}+\frac{1}{\Omega^2}\sum_{k\neq 0}\frac{\exp(ik\Omega t)-1}{k^2}
\frac{\partial f_k}{\partial \tilde{X}}f_0
+\frac{1}{\Omega^2}\sum_{k\neq 0}\frac{\exp(ik\Omega t)-1}{k^2}
\frac{\partial f_k}{\partial \tilde{Y}}\dot{\varphi}(t-\tau)
\\
& &
-\frac{1}{\Omega^2}\sum_{k\neq 0}\frac{\exp(ik\Omega t)-1}{k^2}
\frac{\partial f_0}{\partial \tilde{X}}f_k
-\frac{1}{\Omega^2}\sum_{\substack{k\neq 0\\\ell\neq 0,-k}}\frac{\exp(i(k+\ell)\Omega t)-1}{\ell(k+\ell)}
\frac{\partial f_k}{\partial \tilde{X}}f_\ell
\\&&
+\frac{1}{\Omega^2}\sum_{k,\ell\neq 0}\frac{\exp(ik\Omega t)-1}{k\ell}
\frac{\partial f_k}{\partial \tilde{X}}f_\ell
\end{eqnarray*}
for $0\le t< \tau$ and
\begin{eqnarray*}
X&=&
\tilde{X}
+\frac{1}{\Omega^2}\sum_{k\neq 0}\frac{\exp(ik\Omega t)-1}{k^2}\frac{\partial f_k}{\partial \tilde{X}}f_0
+\frac{1}{\Omega^2}\sum_{k\neq 0}\frac{\exp(ik\Omega t)-1}{k^2}\frac{\partial f_k}{\partial \tilde{Y}}f_{0\tau}
\\
&&-\frac{1}{\Omega^2}\sum_{k\neq 0}\frac{\exp(ik\Omega t)-1}{k^2}\frac{\partial f_0}{\partial \tilde{X}}f_k
-\frac{1}{\Omega^2}\sum_{k\neq 0}\frac{\exp(ik\Omega(t-\tau))-\exp(-ik\Omega\tau)}{k^2}
\frac{\partial f_0}{\partial \tilde{Y}}f_{k\tau}
\\
&&-\frac{1}{\Omega^2}\sum_{\substack{k\neq 0\\\ell\neq 0,-k}}\frac{\exp(i(k+\ell)\Omega t)-1}{l(k+\ell)}
\frac{\partial f_k}{\partial \tilde{X}}f_\ell
+\frac{1}{\Omega^2}\sum_{k,\ell\neq 0}\frac{\exp(ik\Omega t)-1}{k\ell}\frac{\partial f_k}{\partial \tilde{X}}f_\ell
\\
&&
-\frac{1}{\Omega^2}\sum_{\substack{k\neq 0\\\ell\neq 0,-k}}\exp(-il\Omega \tau)\frac{\exp(i(k+\ell)\Omega t)-1}{\ell(k+\ell)}
\frac{\partial f_k}{\partial \tilde{Y}}f_{\ell\tau}
+\frac{1}{\Omega^2}\sum_{k,\ell\neq 0}\frac{\exp(ik\Omega t)}{k\ell}\frac{\partial f_k}{\partial \tilde{Y}}f_{\ell\tau}
\end{eqnarray*}
for $t\ge \tau$, where $f_k=f_k(\tilde{X},\tilde{Y})$ and $f_{k\tau}=f_k(\tilde{X},\tilde{Y})$ ($\tilde{Y}(t)
=\tilde{X}(t-\tau)$, $\tilde{Z}(t) =\tilde{Y}(t-\tau)$).

Taking the last displays to \eqref{eq:l1}--\eqref{eq:l2} and discarding the $\mathcal{O}(1/\Omega^2)$
remainder results in the averaged system \eqref{aversys0}--\eqref{aversys1} below, where, for notational
simplicity, we have suppressed the tildes over $X$, $Y$ and $Z$ and   \([\cdot,\cdot]\) is the Lie-Jacobi
bracket or commutator defined by
$$[f_i,f_j]=\frac{\partial f_j}{\partial X}f_i-\frac{\partial f_i}{\partial X}f_j,~~~i,j\in \mathbb{Z}.
$$
The averaged solution \(X\) will obviously approximate \(x\) at stroboscopic times with
$\mathcal{O}(1/\Omega^2)$ errors (arising from discarding the $\mathcal{O}(1/\Omega^2)$ remainder);  this
implies that, at least for \(\Omega\) sufficiently large, \(X\) exists in the interval \([0,t_{max}]\). We
then have proved the following result.

\begin{theorem}{\em
\label{theorem1}
 Consider the averaged problem
\begin{eqnarray}\label{aversys0}
\dot{X}(t)&=&F(X(t),Y(t),Z(t),t;\Omega),\qquad t\ge 0,\\
Y(t) &=& X(t-\tau),\qquad t\geq 0,\nonumber\\
Z(t) &=& Y(t-\tau),\qquad t\geq \tau,\nonumber\\
X(t) &=& \varphi(t), \qquad -\tau\leq t\leq 0,\nonumber
\end{eqnarray}
 with \( F(X,Y,Z,t;\Omega)= F^{(1)}(X,Y,t;\Omega)\),
\begin{eqnarray}\label{aversys}
F^{(1)}(X,Y,t;\Omega)=f_0+\sum_{k>0}\frac{i}{k\Omega}\left([f_k-f_{-k},f_0]+[f_{-k},f_{k}]\right)-\sum_{k\neq0}\frac{i}{k\Omega}\frac{\partial f_k}{\partial Y}\dot{\varphi}(t-\tau),
\end{eqnarray}
for $0\le t<\tau$ and \( F(X,Y,Z,t;\Omega)= F^{(2)}(X,Y,Z;\Omega)\),
\begin{eqnarray}\label{aversys1}
F^{(2)}(X,Y,Z;\Omega)&=&f_0+\sum_{k>0}\frac{i}{k\Omega}\left([f_k-f_{-k},f_0]+[f_{-k},f_{k}]\right)-\sum_{k\neq0}\frac{i}{k\Omega}\frac{\partial f_k}{\partial Y}f_{0\tau}\\ \nonumber
& &\qquad+\sum_{k\neq0}\frac{i}{k\Omega}\frac{\partial f_0}{\partial Y}f_{k\tau}+\sum_{k\neq0}\frac{i\exp(ik\Omega \tau)}{k\Omega}\frac{\partial f_k}{\partial Y}f_{-k\tau},
\end{eqnarray}
for $t\ge \tau$.
For \(\Omega\) sufficiently large, the averaged solution \(X\) exists in the interval \([0,t_{max}]\). Furthermore, for the approximation at stroboscopic times, we may write
\[
\max_{ 0\leq t^{(k)}\leq t_{max}}\|x(t^{(k)})-X(t^{(k)}) \|=\mathcal{O}(\frac{1}{\Omega^2}),\qquad \Omega\rightarrow \infty.
\]
}
\end{theorem}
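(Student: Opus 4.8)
\emph{Proof proposal.} The construction preceding the statement already supplies the essential object: the composition of the two near-identity changes of variables is a map $x\mapsto W$ (the doubly-transformed variable written $\tilde X$ above) that is \emph{stroboscopic}, reduces to the identity on $[-\tau,0]$ where it returns $\varphi$, and carries the exact oscillatory solution $x$ of \eqref{specficsys} into a function $W$ satisfying
\[
\dot W(t)=F\bigl(W(t),W(t-\tau),W(t-2\tau),t;\Omega\bigr)+r(t),\qquad \|r(t)\|=\mathcal{O}(1/\Omega^2),
\]
uniformly on $[0,t_{max}]$, with $F=F^{(1)}$ on $[0,\tau)$ and $F=F^{(2)}$ on $[\tau,t_{max}]$ as in \eqref{aversys}--\eqref{aversys1}. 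Since $x$ is assumed to exist on $[0,t_{max}]$ and the transformation is $\mathrm{Id}+\mathcal{O}(1/\Omega)$, it is well defined and invertible there for $\Omega$ large (Neumann series or implicit-function argument), so $W=x+\mathcal{O}(1/\Omega)$ stays in a fixed compact set $K$ independent of $\Omega$. The plan is then to compare the \emph{exact} averaged solution $X$ of \eqref{aversys0} with this approximate solution $W$.

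First I would obtain existence of $X$ on $[0,t_{max}]$ together with $\|X-W\|=\mathcal{O}(1/\Omega^2)$ by a continuation argument. Because $f\in C^3$, each $f_k$ and its first derivatives are Lipschitz near $K$, so $F$ is Lipschitz in its state arguments with a constant uniform in $\Omega$ for $\Omega$ large (the $\Omega$-dependent corrections carry an explicit factor $1/\Omega$). On the maximal interval of existence of $X$, subtracting the two equations and applying the method of steps over the successive intervals $[0,\tau],[\tau,2\tau],\dots$ (the delay form of Gronwall's lemma) yields $\|X(t)-W(t)\|\le C\,\Omega^{-2}e^{L't}$ with $C$ independent of $\Omega$ and identical data $X=W=\varphi$ on $[-\tau,0]$. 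For $\Omega$ large this keeps $X$ inside $K$, so $X$ cannot escape the compact set and hence extends to all of $[0,t_{max}]$; this simultaneously proves existence and delivers the uniform bound.

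Finally I would evaluate at the stroboscopic times. By construction both changes of variables are stroboscopic: every correction term carries a factor such as $\exp(ik\Omega t)-1$, $\exp(i(k+\ell)\Omega t)-1$ or $\exp(ik\Omega(t-\tau))-\exp(-ik\Omega\tau)$, each of which vanishes whenever $\Omega t\in 2\pi\mathbb{Z}$, i.e.\ at $t=t^{(k)}$. Hence $W(t^{(k)})=x(t^{(k)})$, and combining this identity with the Gronwall bound of the previous paragraph gives $\|x(t^{(k)})-X(t^{(k)})\|=\mathcal{O}(1/\Omega^2)$ uniformly over $0\le t^{(k)}\le t_{max}$, as claimed.

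The step I expect to be most delicate is not the Gronwall comparison but the justification that the defect $r(t)$ is genuinely $\mathcal{O}(1/\Omega^2)$ \emph{uniformly} in $t$. The transformed equation arises from Taylor expansion, and the neglected terms involve derivatives of $x$; although $x=\mathcal{O}(1)$, its higher derivatives grow like powers of $\Omega$ (so $\ddot x=\mathcal{O}(\Omega)$, etc.), and one must verify that these growths are compensated exactly by the $1/\Omega$ and $1/\Omega^2$ prefactors—this is the content of the lengthy expansion behind the Lemma \eqref{varchange} and the second transformation. A secondary technical point is the matching at $t=\tau$, where $F$ switches form and the solution loses a derivative: one must check that $W$ stays continuous across $t=\tau$ and that the method-of-steps estimate passes through this interior point without loss.
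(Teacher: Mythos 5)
Your proposal follows essentially the same route as the paper: the theorem is obtained by composing the two stroboscopic near-identity changes of variables, discarding the $\mathcal{O}(1/\Omega^2)$ remainder, and noting that the transformed exact solution and the averaged solution coincide with the data $\varphi$ and agree at stroboscopic times. The paper compresses the comparison step into the single remark that the approximation is ``obvious''; your Gronwall/method-of-steps argument and the continuation argument for existence are exactly the details being elided, and your identification of the uniform $\mathcal{O}(1/\Omega^2)$ bound on the defect as the delicate point matches the paper's own admission that the Lemma is ``a straightforward but very lengthy exercise on Taylor expansion.''
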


We point out that, for \(t\geq \tau\), the dependence of \(\dot X(t)\) on the past values \(X(s)\), \(-\tau\leq s< t\) is through {\em both} \(Y(t) = X(t-\tau)\) {\em and} \(Z(t) = X(t-2\tau)\); this is to be compared with the situation for the original oscillatory system \eqref{gensys}, which does not include the delay \(2\tau\).
The functions \(F^{(1)}\) and \(F^{(2)}\) are of class \(C^2\), but of course \(F\) is discontinuous at
\(t =\tau\). By implication, \(X(t)\) will be of class \(C^3\) except at \(t = 0,\tau\) (where \(\dot{X}(t)\) has a jump discontinuity), at \( t = 2\tau\) (where the second derivative jumps) and \(t = 3\tau\) (where the third derivative jumps).

In the particular case where \(f\) does not depend on the delayed argument \(y\), so that we are dealing with an ordinary differential system,
\eqref{aversys} and \eqref{aversys1} are in agreement with formula (64) of \cite{Murua}.

\section{Error analysis: exact micro-integration}
\label{sec:error1}
In this section we investigate  the global error of the algorithm under the assumption that the micro-integration
is exact, so that the macro-integration and the numerical differentiations performed to find the slopes \(F_n\)
are the only sources
of error. This scenario is of course relevant when the micro-step \(h\) is chosen to be very small.
The errors due to the
Euler micro-integration will be studied in the next section.

In order to avoid misunderstandings, we state
that \lq exact micro-integration\rq\ has to be
understood as follows. Consider e.g.\ the
computation of \(X_2\), \dots, \(X_N\) in
Table~\ref{tab:algorithm}; in this section we
assume that
\[
F_n = \frac{u_n(T)-u_n(-T)}{2T},
\]
where \(u_n\) solves the problem \(\dot u_n = f(u_n,v_n,\Omega t)\), \(u_n(0)= X_n\), \( v_n(t) =\varphi(-\tau+nH+t)\).
Of course similar modifications of the algorithm in Table~\ref{tab:algorithm} have to be carried out for the computation of \(F_n\) for other values of \(n\).

We begin by proving a stability bound for the macro-integrator.
We consider a  sequence \(\{\hat{X}_n\}\) of vectors in \(\mathbb{R}^D\) such that
\(\hat{X}_{-n}=\varphi(-nH)\), \(n=1,\dots,N\), and furthermore satisfy the macro-integration equations
with residuals \(\{ \hat{\rho}_n\}\), i.e.
\begin{eqnarray*}
\hat{X}_1&=&\hat{X}_0+HF^{(1)}(\hat{X}_0,\hat{X}_{-N},0;\Omega)+H\hat{\rho}_0,\\
\hat{X}_{n+1}&=&\hat{X}_n+\frac{3H}{2}F^{(1)}(\hat{X}_n,\hat{X}_{n-N},nH;\Omega)\\
&&\qquad\qquad-\frac{H}{2}F^{(1)}(\hat{X}_{n-1},\hat{X}_{n-1-N},(n-1)H;\Omega)+H\hat{\rho}_n,
\qquad n=1,\dots,N-1,\\
\hat{X}_{N+1}&=&\hat{X}_N+HF^{(2)}(\hat{X}_N,\hat{X}_0,\hat{X}_{-N};\Omega)+H\hat{\varrho}_N,\\
\hat{X}_{n+1}&=&\hat{X}_n+\frac{3H}{2}F^{(2)}(\hat{X}_n,\hat{X}_{n-N},\hat{X}_{n-2N};\Omega)\\
&&\qquad\qquad-\frac{H}{2}F^{(2)}(\hat{X}_{n-1},\hat{X}_{n-1-N},\hat{X}_{n-1-2N};\Omega)+H\hat{\rho}_n,
\qquad n\ge N+1.
\end{eqnarray*}
Furthermore we consider a second sequence \(\{\tilde{X}_n\}\) with \(\tilde{X}_{-n}=\varphi(-nH)\),
\(n=1,\dots,N\), satisfying the macro-integration equations above with residuals \(\{ \tilde{\rho}_n\}\), rather than \(\{ \hat{\rho}_n\}\).

\begin{proposition}{\em\label{prop:stability}
 To each bounded set \(B\subset\mathbb{R}^D\), there corresponds a constant \(C>0\),
 independent of \(H\) and \(\Omega\), such that for any  sequences \(\{\hat{X}_n\}\) and \(\{\tilde{X}_n\}\)
 as above contained in \(B\),
\[
\|\hat{X}_n-\tilde{X}_n \|\le C\,\sum_{k=0}^{n-1} H \|\hat{\rho}_k-\tilde{\rho}_k \|,\qquad 0\leq nH\leq t_{max}.
\]
}
\end{proposition}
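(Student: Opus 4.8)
The plan is to prove the bound by a discrete Gronwall argument applied to the error sequence $e_n := \hat{X}_n - \tilde{X}_n$. First I would record the initial data: since $\hat{X}_0 = \tilde{X}_0 = \varphi(0)$ and $\hat{X}_{-j} = \tilde{X}_{-j} = \varphi(-jH)$ for $j = 1,\dots,N$, we have $e_0 = 0$ and $e_{-j} = 0$ for $j=1,\dots,N$. Subtracting the equations satisfied by $\{\tilde{X}_n\}$ from those satisfied by $\{\hat{X}_n\}$ yields, in each of the four regimes ($n=0$, Euler with $F^{(1)}$; $1\le n\le N-1$, Adams--Bashforth with $F^{(1)}$; $n=N$, Euler with $F^{(2)}$; $n\ge N+1$, Adams--Bashforth with $F^{(2)}$), a recursion for $e_{n+1}$ in terms of $e_n$, $e_{n-1}$, the delayed errors $e_{n-N}$, $e_{n-1-N}$ (and, for $n\ge N+1$, also $e_{n-2N}$, $e_{n-1-2N}$), the corresponding differences of the slope functions, and the residual difference $H(\hat\rho_n - \tilde\rho_n)$.

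The crux of the argument --- and the step I expect to be the main obstacle --- is to secure a Lipschitz constant $L$ for $F^{(1)}$ and $F^{(2)}$ on the bounded set $B$ that is uniform in $\Omega$. This cannot be taken for granted, since both functions depend on $\Omega$. However, inspection of \eqref{aversys}--\eqref{aversys1} shows that each is $f_0$ plus a sum of $\mathcal{O}(1/\Omega)$ corrections, the individual summands carrying a factor $1/k$ in front of commutators and first derivatives of the Fourier coefficients $f_k$. Because $f\in C^3$, repeated integration by parts in $\theta$ gives $\|f_k\|\le C|k|^{-3}$, $\|\partial f_k\|\le C|k|^{-2}$ and $\|\partial^2 f_k\|\le C|k|^{-1}$ on the closure of $B$, uniformly; weighting by $1/k$, the series in \eqref{aversys}--\eqref{aversys1} and those obtained by differentiating them once more in $X$, $Y$, $Z$ all converge absolutely and uniformly and are bounded independently of $\Omega$. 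Hence the first derivatives of $F^{(1)}$ and $F^{(2)}$ on $B$ are bounded by some $L$ valid for all $\Omega\ge \Omega_0$, which is exactly the uniform Lipschitz bound needed.

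With $L$ available, I would bound each slope difference by $L$ times the sum of the norms of the relevant current and delayed errors; since the Adams--Bashforth weights $3/2$ and $1/2$ are absolute constants, this produces a recursion $\|e_{n+1}\| \le \|e_n\| + C_1 H\bigl(\|e_n\|+\|e_{n-1}\|+\|e_{n-N}\|+\cdots\bigr) + H\|\hat\rho_n-\tilde\rho_n\|$, with $C_1$ depending only on $L$. Every index in the delayed terms is at most $n$, or negative (in which case the error vanishes by the initial data), so with the running maximum $E_n := \max_{0\le j\le n}\|e_j\|$ and $R_k := \|\hat\rho_k-\tilde\rho_k\|$ each delayed contribution is dominated by $E_n$, giving $\|e_{n+1}\|\le (1+CH)E_n + HR_n$ for a suitable $C$ (e.g. $C=6L$), and therefore $E_{n+1}\le (1+CH)E_n + HR_n$ with $E_0=0$.

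Finally I would solve this scalar recursion by the discrete Gronwall lemma to get $E_n \le \sum_{k=0}^{n-1}(1+CH)^{\,n-1-k}HR_k \le (1+CH)^{\,n-1}\sum_{k=0}^{n-1}HR_k$. Since $nH\le t_{max}$ we have $(1+CH)^{\,n-1}\le e^{C t_{max}}$, a bound independent of $H$ and $\Omega$, so that $\|e_n\| \le E_n \le e^{C t_{max}}\sum_{k=0}^{n-1}H\|\hat\rho_k-\tilde\rho_k\|$, which is the assertion with the stated constant. The Euler steps at $n=0,N$ are treated identically, with even smaller constants, and so pose no extra difficulty; the residual $\hat\varrho_N$ at $n=N$ simply enters as the $k=N$ term of the sum.
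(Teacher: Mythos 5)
Your proposal is correct and follows essentially the same route as the paper, which simply asserts that $F$ is Lipschitz in $X$, $Y$, $Z$ uniformly on $B$ and then invokes a standard recurrence (discrete Gronwall) argument. You merely fill in the details the paper leaves implicit, in particular the justification via Fourier-coefficient decay that the Lipschitz constant can be taken uniform in $\Omega$.
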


\begin{proof} From the hypotheses in the  preceding section,
\(F\) is a Lipschitz
continuous function of \(X\), \(Y\) and \(Z\) with a Lipschitz constant that is uniform as \(t\) varies in the interval \(0\leq t\leq t_{max}\) and \(X\), \(Y\) and \(Z\) vary in \(B\).
The stability bound is then proved  in a standard way by recurrence.
\end{proof}

In our next result we investigate the consistency of the formulas \eqref{eq:slope1}--\eqref{eq:slope2} used to recover the slopes \(F_n\). There are four cases corresponding to the four successive blocks in Table~\ref{tab:algorithm}.

\begin{proposition}\label{prop:fourcases}{\em The following results hold:
\begin{enumerate}
\item If the function \(u_n\) solves the problem
\begin{eqnarray*}
\dot{u}_n(t)&=&f(u_n(t),\varphi(-\tau+t_n+t),\Omega t),\\
u_n(0)&=&X_n,
\end{eqnarray*}
then, with \(Y_n = \varphi(-\tau+t_n)\),
\begin{equation}\label{slope0}
\frac{u_n(T)-u_n(0)}{T}=f_0(X_n,Y_n)+\mathcal{O}(\frac{1}{\Omega}).
\end{equation}
\item  If  \(u_n\) is as in the preceding item,
then
\begin{equation}\label{slope1}
\frac{u_n(T)-u_n(-T)}{2T}=f_0+\sum_{k>0}\frac{i}{k\Omega}\left([f_k-f_{-k},f_0]+[f_{-k},f_{k}]\right)
-\sum_{k\neq0}\frac{i}{k\Omega}\frac{\partial f_k}{\partial Y}\dot{\varphi}(-\tau+t_n)
+\mathcal{O}(\frac{1}{\Omega^2}),
\end{equation}
where \(f_k\), \(k\in\mathbb{Z}\) are evaluated at \((X_n,Y_n)\), \(Y_n = \varphi(-\tau+t_n)\).
\item If  \(u_n\) and \(v_n\) satisfy
 \begin{eqnarray*}
\dot{u}_n(t)&=&f(u_n(t),v_n(t),\Omega t),\\
\dot{v}_n(t)&=&f(v_n(t),w_n(t),\Omega t),\\
u_n(0)&=&X_n,\\
v_n(0)&=&Y_n,\\
w_n(0)&=&Z_n,
\end{eqnarray*}
with \(w_n\) a continuously differentiable function, then
\begin{equation}\label{slope2}
\frac{u_n(T)-u_n(0)}{T}=f_0(X_n,Y_n)+\mathcal{O}(\frac{1}{\Omega}).
\end{equation}
\item If  \(u_n\) and \(v_n\) are as in 3., then
\begin{eqnarray}\label{slope3}\nonumber
\frac{u_n(T)-u_n(-T)}{2T}&=&f_0+\sum_{k>0}\frac{i}{k\Omega}\left([f_k-f_{-k},f_0]+[f_{-k},f_{k}]\right)-\sum_{k\neq0}\frac{i}{k\Omega}\frac{\partial f_k}{\partial Y}f_{0\tau}\\
& &\qquad+\sum_{k\neq0}\frac{i}{k\Omega}\frac{\partial f_0}{\partial Y}f_{k\tau}+\sum_{k\neq0}\frac{i}{k\Omega}\frac{\partial f_k}{\partial Y}f_{-k\tau}+\mathcal{O}(\frac{1}{\Omega^2}),
\end{eqnarray}
where \(f_k\), \(k\in\mathbb{Z}\) are evaluated at \((X_n,Y_n)\) and \(f_{k\tau}\) stands for \(f_k(Y_n,Z_n)\).
\end{enumerate}
}
\end{proposition}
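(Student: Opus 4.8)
The plan is to derive all four formulas from a single computation. Each difference quotient of $u_n$ is the average of its slope over a window that spans a whole number of periods, and on such a window the nonzero Fourier modes of the oscillatory right-hand side are orthogonal to the constants. For the central quotients I would write
\[
\frac{u_n(T)-u_n(-T)}{2T}=\frac{1}{2T}\int_{-T}^{T}\sum_{k\in\mathbb{Z}}\exp(ik\Omega s)\,f_k\big(u_n(s),v_n(s)\big)\,ds,
\]
and likewise for the forward quotients over $[0,T]$. The only trigonometric moments needed are $\frac{1}{2T}\int_{-T}^{T}\exp(im\Omega s)\,ds=\delta_{m,0}$, which annihilates every non-resonant mode, and $\frac{1}{2T}\int_{-T}^{T}s\exp(im\Omega s)\,ds$, which vanishes for $m=0$ and equals $1/(im\Omega)=\mathcal{O}(1/\Omega)$ for $m\neq0$; quadratic moments are $\mathcal{O}(1/\Omega^2)$ and are swept into the remainder. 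The second ingredient is the expansion of the oscillatory solution on the window: since $\dot u_n=f_0+\mathcal{O}(1/\Omega)$ with the $\mathcal{O}(1/\Omega)$ part purely oscillatory, integrating over the $\mathcal{O}(1/\Omega)$-wide window gives, uniformly on $[-T,T]$,
\[
u_n(s)=X_n+s\,f_0(X_n,Y_n)+\frac{1}{\Omega}\sum_{k\neq0}\frac{\exp(ik\Omega s)-1}{ik}f_k(X_n,Y_n)+\mathcal{O}(1/\Omega^2),
\]
the oscillatory shape being the one already appearing in \eqref{varchange}; in cases 3 and 4 the delayed solution $v_n$ has the same expansion with $(X_n,Y_n)$ replaced by $(Y_n,Z_n)$ and slow drift $s\,f_{0\tau}$. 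Here I would use $f\in C^3$ and the boundedness of the solutions to keep all remainders uniform in $\Omega$.

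Cases 1 and 3 then need only the leading order: over the single period $[0,T]$ each $k\neq0$ mode integrates to zero and the $k=0$ mode contributes $f_0(u_n(s),v_n(s))=f_0(X_n,Y_n)+\mathcal{O}(1/\Omega)$, so the forward quotient is $f_0(X_n,Y_n)+\mathcal{O}(1/\Omega)$. The loss of one order is precisely the uncancelled $\mathcal{O}(1/\Omega)$ drift left by the non-symmetric window, which is why the forward formula is confined to the start points $t=0,\tau$. The substance is in case 2. Although the Lemma already transforms \eqref{specficsys}, its averaged field \eqref{eq:l1} holds only for $t\ge0$, whereas the central window straddles $s=0$; I would therefore average the raw equation over $[-T,T]$ directly. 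Substituting the expansion of $u_n$, Taylor-expanding each $f_k$ about $(X_n,Y_n)$ to first order (second-order Taylor terms give only quadratic moments, hence $\mathcal{O}(1/\Omega^2)$) and collecting resonant contributions, the smooth history $\varphi(-\tau+t_n+\cdot)$ contributes only $-\sum_{k\neq0}\frac{i}{k\Omega}\frac{\partial f_k}{\partial Y}\dot\varphi(-\tau+t_n)$, while the remaining first-order terms (from the slow drift of $u_n$, from the $k=0$ mode acting on the oscillation of $u_n$, and from the resonant coupling of $\exp(ik\Omega s)$ with the complementary mode of that oscillation) combine into $\frac{\partial f_0}{\partial X}f_k-\frac{\partial f_k}{\partial X}f_0+\frac{\partial f_k}{\partial X}f_{-k}$, which after folding $k$ with $-k$ becomes $[f_k-f_{-k},f_0]+[f_{-k},f_k]$. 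This is precisely $F^{(1)}$ in \eqref{aversys}.

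Case 4 repeats this computation with $v_n$ now oscillating. Its slow drift $s\,f_{0\tau}$ replaces $s\,\dot\varphi$ and yields $-\sum_{k\neq0}\frac{i}{k\Omega}\frac{\partial f_k}{\partial Y}f_{0\tau}$; the $Y$-derivative of the mean mode acting on the oscillation of $v_n$ gives $+\sum_{k\neq0}\frac{i}{k\Omega}\frac{\partial f_0}{\partial Y}f_{k\tau}$; and the resonant coupling of $\exp(ik\Omega s)$ with the complementary mode of $v_n$'s oscillation gives $+\sum_{k\neq0}\frac{i}{k\Omega}\frac{\partial f_k}{\partial Y}f_{-k\tau}$, matching \eqref{slope3}. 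I would emphasize that this last term carries \emph{no} factor $\exp(ik\Omega\tau)$: in the SAM configuration the oscillation of $v_n$ has phase $\Omega s$, aligned at $s=0$ because every micro-integration starts from $\theta=0$, rather than the shifted phase $\Omega(s-\tau)$ of the genuine delayed solution. This is exactly why \eqref{slope3} reads $f_{-k\tau}$ whereas the true averaged field $F^{(2)}$ in \eqref{aversys1} carries $\exp(ik\Omega\tau)f_{-k\tau}$; the resulting $\mathcal{O}(1/\Omega)$ discrepancy is a genuine feature of the algorithm, to be reckoned with in the error analysis, and not an inconsistency in the present lemma.

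The main obstacle is the bookkeeping of case 4: I must carry the $1/\Omega$ expansions of \emph{both} $u_n$ and $v_n$ with remainders uniform in $\Omega$, track every product of Fourier modes through the symmetric whole-period average, and verify that only the advertised resonant combinations survive while everything else is either odd (and cancels) or genuinely $\mathcal{O}(1/\Omega^2)$. The delicate conceptual point inside this is the phase convention just described, which alone decides whether the factor $\exp(ik\Omega\tau)$ is present; keeping the aligned phase $\Omega s$ distinct from the phase of the true delayed solution is what makes case 4 substantially more than a routine rerun of case 2.
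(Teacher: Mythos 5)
Your proposal is correct and follows essentially the same route as the paper: a Picard/integral-equation expansion of the micro-solution to second order in $1/\Omega$ (with the oscillatory profile $\sum_{k\neq0}\alpha_k(s)f_k$), a first-order Taylor expansion of the Fourier coefficients about $(X_n,Y_n)$, and evaluation over the whole-period window, which the paper organizes as evaluating the expanded $u(t)$ at $t=\pm T$ rather than as trigonometric moments of the averaged integrand. Your observation on the phase alignment at $\theta=0$, which explains the absence of the factor $\exp(ik\Omega\tau)$ in \eqref{slope3} as compared with \eqref{aversys1}, is also exactly the point the paper makes in the discussion following the proposition.
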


\begin{proof}
Here we prove the fourth case; the other proofs are similar (and slightly simpler). For simplicity the
subindex \(n\) is dropped. We rewrite the equation for \(u\)
 in integral form
\[
u(t)=X+\int_0^t f(u(s),v(s),\Omega s)ds = \int_0^t \sum_k \exp(ik\Omega s) f_h(u(s),v(s))ds.
\]
and use Picard's iteration.  Clearly for \(-T\leq t \leq T\),
$$
u(t)=X+\mathcal{O}(\frac{1}{\Omega}).
$$
We take this equality to the integral equation above and integrate with respect to $s$  to find
$$
u(t)=X+tf_0(X,Y)+\sum_{k\neq0}\alpha_k(t)f_k(X,Y)+\mathcal{O}(\frac{1}{\Omega^2}),
$$
where
$$
\alpha_k(t)=\frac{\exp(ik\Omega t)-1}{ik\Omega},~~~k\in \mathbb{Z}.
$$
Then,
\begin{eqnarray}\nonumber
u(t)&=&X+\int_0^t \sum_{k\in\mathbb{Z}}\exp(ik\Omega s)\\ &&\nonumber\qquad \times
f_k\big(X+sf_0+\sum_{m\neq0}\alpha_m(s)f_m+\mathcal{O}(\frac{1}{\Omega^2}),
Y+sf_{0\tau}
+\sum_{m\neq0}\alpha_m(s)f_{m\tau}+\mathcal{O}(\frac{1}{\Omega^2})\big)ds.
\end{eqnarray}
By Taylor expanding $f$ at $X,Y$, we obtain,  for \(-T\leq t \leq T\):
\begin{eqnarray*}
u(t)&=&X+tf_0+\sum_{k\neq 0}\alpha_k(t)f_k+\int_0^t s \frac{\partial f_0}{\partial X}f_0ds
+\int_0^t \sum_{k\neq0}\alpha_{k}(s)\frac{\partial f_0}{\partial X}f_kds
\\
& &\qquad+\int_0^t \sum_{k\neq0}\alpha_{k}(s)\frac{\partial f_0}{\partial Y}f_{k\tau}ds
+\int_0^t s\frac{\partial f_0}{\partial Y}f_{0\tau}ds
+\int_0^t \sum_{k\neq0}\exp(ik\Omega s)s\frac{\partial f_k}{\partial X}f_{0}ds
\\
& &\qquad+\int_0^t
\sum_{k,m\neq0}\exp(ik\Omega s)\alpha_{m}(s)\frac{\partial f_k}{\partial X}f_{m}ds+\int_0^t\sum_{k\neq0}\exp(ik\Omega s)s\frac{\partial f_k}{\partial Y}f_{0\tau}ds
\\
&&\qquad
+\int_0^t \sum_{k,m\neq0}\exp(ik\Omega s)\alpha_m(s)\frac{\partial f_k}{\partial Y}f_{m\tau}ds
+\mathcal{O}(\frac{1}{\Omega^3}).
\end{eqnarray*}
The proof concludes by evaluating this expression at \(t =\pm T\) and taking those values to the left-hand
side of \eqref{slope3}.
\end{proof}

According to this result, \eqref{slope0} and \eqref{slope2} provide
approximations with $\mathcal{O}(1/\Omega)$ errors to \eqref{aversys}
and \eqref{aversys1} (evaluated at \(X=X_n\), \(Y=Y_n\), \(t=t_n\)) respectively,
as expected from the use of forward differencing.
Similarly  \eqref{slope1} approximates \eqref{aversys} (at \(X=X_n\), \(Y=Y_n\), \(t=t_n\)) with  $\mathcal{O}(1/\Omega^2)$ error,
as expected of central differences.  However, when comparing \eqref{slope3} with \eqref{aversys1} (at \(X=X_n\), \(Y=Y_n\), \(Z=Z_n\), \(t=t_n\)),
we observe that the last sum in \eqref{aversys1} has a factor \(\exp(ik\Omega\tau)\), which is not present
in the last sum in
\eqref{slope3} and therefore the error is, in general, only $\mathcal{O}(1/\Omega)$. To
achieve $\mathcal{O}(1/\Omega^2)$ errors
we may assume that the functions $f_k(X,Y)$, $k\neq 0$, are independent
of the second argument $Y$, i.e.\ the delay argument \(y\) only appears in \(f\) through
\(f_0\) (this is the case in \eqref{toggle}). Alternatively, we may assume
that, for all \(k\neq 0\), \(\exp(ik\Omega\tau)=1\), i.e.\ that \(\tau\) is an integer multiple
of the period \(T = 2\pi/\Omega\).

We are now ready to give the main  result of this section.

\begin{theorem}\label{th:main}{\em
Assume that the problem \eqref{specficsys}, with the smoothness assumptions stated
in the preceding section, is integrated with SAM with exact micro-integrations. In addition assume that one of the following hypotheses
holds:
\begin{itemize}
\item (H1) The oscillatory Fourier coefficients \(f_k\), \(k\neq 0\) of \(f\) do not depend on the delayed argument
\(y\).
\item (H2) The delay  \(\tau\) is an integer multiple of the period \(T = 2\pi/\Omega\).
\end{itemize}

Then there exist constants \(\Omega_0\), \(H_0\) and \(K\) such that, for \(\Omega\geq \Omega_0\) , \(H\leq H_0\),
\(0\leq t_n=nH\leq t_{max}\), the difference between the numerical solution and the solution of the averaged problem
 has the bound
\begin{equation}\label{eq:thbound}
\|X_n-X(t_n) \|\le K\left( H^2+\frac{H}{\Omega}+\frac{1}{\Omega^2}\right).
\end{equation}
}
\end{theorem}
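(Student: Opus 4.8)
The plan is to prove \eqref{eq:thbound} by a Lax--Richtmyer-type argument, combining the stability bound of Proposition~\ref{prop:stability} with the consistency estimates of Proposition~\ref{prop:fourcases} and the local truncation error of the macro-integrator applied to the averaged problem. The two sequences to be compared are the restriction of the averaged solution to the grid, \(\hat{X}_n = X(t_n)\), and the SAM output, \(\tilde{X}_n = X_n\). Both are written as solutions of the macro-integration equations driven by \(F^{(1)},F^{(2)}\) with residuals \(\hat{\rho}_n\) and \(\tilde{\rho}_n\) respectively (the history values coincide, since both use \(\varphi(-nH)\) for negative indices), so that Proposition~\ref{prop:stability} immediately gives
\[
\|X_n-X(t_n)\| \le C\sum_{k=0}^{n-1} H\,\|\hat{\rho}_k-\tilde{\rho}_k\|.
\]
It then remains to bound the two families of residuals.

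First I would estimate the discretization residuals \(\hat{\rho}_n\), which measure how well the exact averaged solution satisfies the Adams--Bashforth/Euler relations. Since \(X\) is of class \(C^3\) away from the points \(0,\tau,2\tau,3\tau\), the standard interpolation-error argument gives \(\hat{\rho}_n=\mathcal{O}(H^2)\) at every generic Adams--Bashforth step; the exceptions are the two Euler starts at \(n=0,N\) and the single step \(n=2N\) straddling the jump of \(\ddot{X}\) at \(t=2\tau\), where a direct estimate of the extrapolation error over a stencil containing the corner of \(\dot{X}\) yields only \(\hat{\rho}_n=\mathcal{O}(H)\). At \(n=3N\) merely \(\dddot{X}\) jumps while \(\ddot{X}\) stays continuous and \(\dddot{X}\) bounded, so a Peano-kernel estimate still delivers \(\mathcal{O}(H^2)\). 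Because the exceptional steps are a fixed, \(H\)- and \(\Omega\)-independent number, each weighted by \(H\) in the stability sum, their contribution is \(\mathcal{O}(H^2)\), and hence \(\sum_k H\|\hat{\rho}_k\|=\mathcal{O}(H^2)\).

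Next I would estimate the slope-recovery residuals, which at an Adams--Bashforth step take the form \(\tilde{\rho}_n=\frac{3}{2}(F_n-F_n^{(\cdot)})-\frac{1}{2}(F_{n-1}-F_{n-1}^{(\cdot)})\) (with the obvious one-term version at the Euler steps), where \(F_n^{(\cdot)}\) denotes \(F^{(1)}\) or \(F^{(2)}\) evaluated at the numerical arguments \((X_n,X_{n-N},X_{n-2N},t_n)\). The central-difference items~2 and 4 of Proposition~\ref{prop:fourcases} show that \(F_n-F_n^{(\cdot)}=\mathcal{O}(1/\Omega^2)\) at every generic step, and it is precisely here that hypothesis (H1) or (H2) is indispensable: without it, item~4 loses the factor \(\exp(ik\Omega\tau)\) and degrades to \(\mathcal{O}(1/\Omega)\) at every step with \(t\ge\tau\), which would accumulate and destroy the \(1/\Omega^2\) order. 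The forward-difference slopes \(F_0,F_N\) are only \(\mathcal{O}(1/\Omega)\) accurate (items~1 and 3), so the four steps in which they appear, namely \(n=0,1,N,N+1\), carry residuals \(\tilde{\rho}_n=\mathcal{O}(1/\Omega)\). Summing, the generic steps contribute \(\sum_k H\cdot\mathcal{O}(1/\Omega^2)=\mathcal{O}(1/\Omega^2)\) since \(\sum_k H\le t_{max}\), while the four exceptional steps contribute \(\mathcal{O}(H/\Omega)\).

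Collecting the two bounds in the stability estimate gives \(\|X_n-X(t_n)\|\le K(H^2+H/\Omega+1/\Omega^2)\), which is exactly \eqref{eq:thbound}. The remaining technical point, and the one I expect to be the \emph{main obstacle}, is that Proposition~\ref{prop:stability} presupposes that both sequences lie in a common bounded set \(B\), whereas a~priori boundedness of the nonlinear iteration \(\{X_n\}\) is not given. I would close this gap by the usual bootstrap: assuming inductively that \(X_k\) stays within a fixed neighbourhood of \(X(t_k)\) for \(k<n\) (so that all arguments remain in \(B\) and the residual estimates, with their \(B\)-uniform Lipschitz and consistency constants, apply), the stability bound forces \(\|X_n-X(t_n)\|\) to be \(\mathcal{O}(H^2+H/\Omega+1/\Omega^2)\), which is smaller than the neighbourhood radius once \(H\le H_0\) and \(\Omega\ge\Omega_0\); this propagates the induction and fixes the admissible ranges of \(H\) and \(\Omega\). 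Note finally that \(H/\Omega\le\frac{1}{2}(H^2+1/\Omega^2)\), so the bound is indeed of the advertised form \(\mathcal{O}(H^2+1/\Omega^2)\).
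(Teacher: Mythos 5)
Your proposal is correct and follows essentially the same route as the paper: the stability bound of Proposition~\ref{prop:stability} applied to the grid restriction of $X$ and to the SAM iterates, with residuals $\mathcal{O}(H^2)$ except at $n=0,N,2N$ for the averaged solution and $\mathcal{O}(1/\Omega^2)$ except where the forward-difference slopes $F_0,F_N$ enter for the numerical one, closed by a bootstrap to keep both sequences in the set $B$. Your treatment is in fact slightly more careful than the paper's in two minor respects — you note that the $\mathcal{O}(1/\Omega)$ slope errors also contaminate the residuals at $n=1$ and $n=N+1$ through the $-\tfrac{H}{2}F_{n-1}$ term, and you explicitly verify that the jump of the third derivative at $t=3\tau$ does not degrade the order — but neither affects the conclusion.
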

\begin{proof}We apply Proposition~\ref{prop:stability} with \(B\) taken as a large ball containing the trajectory
\(X(t)\), \(0\leq t\leq t_{max}\) in its interior; the vectors \(X(t_n)\) play the role of \(\tilde{X}_n\) and
the vectors \(X_n\) play the role of \(\hat{X}_n\). It is clear that each \(\tilde{X}_n\) is in \(B\). For
\(H\) sufficiently small and \(\Omega\) sufficiently large the same will be true for each \(\hat{X}_n\); this is
established by means of a standard argument by contradiction using \eqref{eq:thbound}.

Each residual \(\tilde{\rho}_n\) is
\(\mathcal{O}(H^2)\) with the exceptions of
\(\tilde{\rho}_0\),  \(\tilde{\rho}_N\) and
\(\tilde{\rho}_{2N}\); these are only
\(\mathcal{O}(H)\) because the first two
correspond to Euler steps and in the third there
is a jump discontinuity in the second time
derivative of the averaged solution. According to
Proposition~\ref{prop:fourcases}, the residuals
\(\hat{\rho}_n\) for the numerical solution are
\(\mathcal{O}(1/\Omega^2)\), with the exceptions
of \(\hat{\rho}_0\) and \(\hat{\rho}_N\), which
are of size \(\mathcal{O}(1/\Omega)\). Taking
these results to
Proposition~\ref{prop:stability}, we get a global
error bound of the  desired form.
\end{proof}

{\em Remark 1.} It is clear that the bound in
\eqref{eq:thbound} may be replaced by one of the
form \(K^\prime (H^2+1/\Omega^2)\). We prefer the
form \eqref{eq:thbound}, as it relates to  three
sources of error:
 the macro-integration \(H^2\) error, the error \(H/\Omega\) arising from  differencing at \(0\),
  \(\tau\), \(2\tau\), and the error from second-order differencing at all other step points \(t_n\).

{\em Remark 2.} The discrepancy between \eqref{slope3} and \eqref{aversys1} that leads to the introduction of
(H1) and (H2) stems from the fact that the values of the phase \(\theta\) at \(t_n\) and \(t_{n-N} =
t_n-\tau\) are in general different in the oscillatory problem but the same in SAM. The hypothesis (H1) holds
in most applications;
in fact in all the papers mentioned in the introduction, the delay system is obtained by adding to
the right-hand side of an oscillatory ordinary differential system a linear feedback term $My$, with $M$ a
constant matrix. (H2) is relevant in those studies where there is freedom in choosing
the exact value of the large frequency \(\Omega\).

 {\em
Remark 3.} If (H1) and (H2) do not hold, the same proof yields for our algorithm a bound of the form
\(K(H^2+1/\Omega)\) under the assumption that \(f\) and \(\varphi\) are \(C^2\) functions. Numerical
experiments {reported in Section~\ref{sec:numer}} reveal that in that case the bound cannot be
improved to \(K(H^2+1/\Omega^2)\). However
 if (H1) and (H2) do not hold, errors of size
\(K(H^2+1/\Omega)\) may be obtained by means of a simpler algorithm
 based on applying forward
differences at each step point \(t_n\); obviously that alternative algorithm does not require the backward
integration legs \eqref{backward}.

\section{Error analysis: micro-integration errors}
\label{sec:error2}
We now take into account the errors introduced by the Euler micro-integration. We begin with an auxiliary result.
Note the improved error bound at the end of the integration interval.

\begin{proposition}{\em Consider the application of Euler's rule with constant step size \(h = T/\nu_{max}\) to integrate in the interval
\(0\leq t\leq T\) the initial value problem \(\dot u = f(u,v,\Omega t)\), \(u(0) = X\), where \(v\) is a known \(C^1\) function. Denote by \(u_\nu\) the Euler solution at \(t =\nu h\). There are constants \(C\), \(\Omega_0\) and \(h_0\) such that for \(h\leq h_0\), \(\Omega\geq \Omega_0\), the following bounds hold:
\begin{eqnarray}
 \| u_{\nu}-u(\nu h)\| &\leq& C h,\qquad \nu = 0, 1,\dots, \nu_{max},\label{eq:auxeuler}\\
 \| u_{\nu_{max}}-u(T)\| &\leq& C \frac{h}{\Omega},\label{eq:auxeulerbis}\\
\left \| \frac{u_{\nu_{max}}-X}{T}-\frac{u(T)-X}{T}\right\| &\leq& C h.\label{eq:auxeulerter}
\end{eqnarray}
}
\end{proposition}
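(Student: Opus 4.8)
The plan is to establish the three estimates in turn, noting at the outset that \eqref{eq:auxeulerter} is a free consequence of \eqref{eq:auxeulerbis}: since $\Omega T = 2\pi$, dividing \eqref{eq:auxeulerbis} by $T$ gives
\[
\left\|\frac{u_{\nu_{max}}-X}{T}-\frac{u(T)-X}{T}\right\| = \frac{1}{T}\,\|u_{\nu_{max}}-u(T)\| \le \frac{1}{T}\,C\frac{h}{\Omega} = \frac{C}{2\pi}\,h,
\]
so only \eqref{eq:auxeuler} and \eqref{eq:auxeulerbis} require work. Throughout I would exploit the structural identity $h = T/\nu_{max} = 2\pi/(\Omega\nu_{max})$, which forces $h\le 2\pi/\Omega$; this is what lets me absorb any $O(h^2)$ (or higher-power) remainder into the target bound, since $h^2 = h\cdot h\le (2\pi/\Omega)\,h = O(h/\Omega)$.

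For \eqref{eq:auxeuler} I would run the classical global error analysis of Euler's rule, the only delicate point being uniformity in $\Omega$. Writing $e_\nu = u_\nu - u(\nu h)$ and letting $L$ be a Lipschitz constant for $f$, the one-step residual is
\[
u((\nu+1)h)-u(\nu h)-h\,f(u(\nu h),v(\nu h),\Omega\nu h) = \int_{\nu h}^{(\nu+1)h}\!\!\big[g(s)-g(\nu h)\big]\,ds,\qquad g(s):=f(u(s),v(s),\Omega s).
\]
Since $\dot u$ and $\dot v$ are $O(1)$ while the fast phase advances by $\Omega(s-\nu h)=O(\Omega h)$, the $C^1$ regularity of $f$ bounds the integrand by $O(\Omega h)$ and hence each residual by $O(\Omega h^2)$. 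Feeding this into a discrete Gronwall recurrence for $\|e_\nu\|$, and using that the amplification factor $e^{LT}$ stays bounded because $T=O(1/\Omega)\to 0$, yields $\|e_\nu\|\le e^{LT}\,\nu_{max}\cdot O(\Omega h^2)=O\!\big((T/h)\,\Omega h^2\big)=O(\Omega T h)=O(h)$, where the last step uses $\Omega T=2\pi$. This is \eqref{eq:auxeuler}, with $C$ independent of $\Omega$.

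The heart of the matter is \eqref{eq:auxeulerbis}, and here I would split
\[
u_{\nu_{max}}-u(T)=\underbrace{h\sum_{\nu=0}^{\nu_{max}-1}\big[f(u_\nu,v_\nu,\Omega\nu h)-g(\nu h)\big]}_{\text{propagated error}}+\underbrace{\Big(h\sum_{\nu=0}^{\nu_{max}-1}g(\nu h)-\int_0^T g(s)\,ds\Big)}_{\text{quadrature error}},
\]
with $v_\nu=v(\nu h)$. The propagated error is harmless: each summand differs only in its first argument, so by \eqref{eq:auxeuler} it is $O(h)$ in norm, and the prefactor $h$ together with $\nu_{max}$ terms contributes $h\,\nu_{max}\,O(h)=T\,O(h)=O(h/\Omega)$. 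The quadrature error is where the whole-period structure enters. Expanding $g(s)=\sum_k\exp(ik\Omega s)\,c_k(s)$ with $c_k(s)=f_k(u(s),v(s))$ and comparing the left-rectangle sum with the integral mode by mode, the essential observation is that over one full period the non-resonant oscillations are annihilated identically by \emph{both} operators: $\int_0^T\exp(ik\Omega s)\,ds=0$ because $\Omega T=2\pi$, while $h\sum_{\nu}\exp(ik\Omega\nu h)=h\sum_\nu\exp(2\pi i k\nu/\nu_{max})=0$ whenever $\nu_{max}\nmid k$, being a complete sum of roots of unity. What survives comes only from the mean mode $k=0$, whose rectangle-rule error has leading term $\tfrac{h}{2}[c_0(T)-c_0(0)]$, and from the slow drift of the coefficients; since $c_k(s)-c_k(0)=O(s)=O(1/\Omega)$ on $[0,T]$, I expect both to be $O(h/\Omega)$ after the $O(h^2)$ remainders are absorbed via $h\le 2\pi/\Omega$.

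The main obstacle is precisely the book-keeping of this last step: one must check that, once the oscillatory modes cancel at the endpoint, every surviving residual genuinely carries the extra factor $1/\Omega$ supplied by the $O(1/\Omega)$ coefficient variation over a single period, rather than merely the generic $O(h)$ of a left-rectangle rule. The cleanest route I would take mirrors the Picard argument already used for Proposition~\ref{prop:fourcases}: expand $u(T)$ and $u_{\nu_{max}}$ by continuous and discrete variation-of-constants respectively, and observe that the continuous oscillatory primitives $\alpha_k(T)=(\exp(ik\Omega T)-1)/(ik\Omega)$ and their discrete geometric-sum analogues both \emph{vanish} at the endpoint, so the two expansions agree at leading order and their mismatch is controlled by the product of the $O(1/\Omega)$ slow variation and the $O(h)$ node spacing. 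Tracking the non-resonant cancellations simultaneously with the slow drift while keeping all constants independent of $\Omega$ is the delicate part; once \eqref{eq:auxeulerbis} is in hand, \eqref{eq:auxeulerter} follows with no further effort by the division argument noted above.
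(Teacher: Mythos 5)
Your proposal is correct and follows essentially the same route as the paper: the standard Euler bound with the second-derivative constant \(M=\mathcal{O}(\Omega)\) compensated by the interval length \(T=\mathcal{O}(1/\Omega)\) for \eqref{eq:auxeuler}; a split of \(u_{\nu_{max}}-u(T)\) into a propagated part (controlled by \eqref{eq:auxeuler} times the factor \(T\)) and a quadrature part in which the oscillatory Fourier modes are annihilated exactly over the full period by both the sum and the integral, leaving only the \(\mathcal{O}(1/\Omega)\) slow drift, for \eqref{eq:auxeulerbis}; and division by \(T\) with \(\Omega T=2\pi\) for \eqref{eq:auxeulerter}. The paper packages the quadrature step slightly differently---freezing the slow arguments at \((X,v(0))\) so that the purely oscillatory piece \(f_1\) is integrated exactly mode by mode and the remainder \(f_2\) has bounded total derivative---but this is the same cancellation-plus-slow-variation argument you describe.
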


\begin{proof} A standard error bound for Euler's rule is
\[
\| u_{\nu}-u(\nu h)\| \leq \frac{\exp(LT)-1}{L} M h,\qquad \nu = 0,\dots,\nu_{max},
\]
where \(L\) is the Lispchitz constant of \(f\) with respect to \(u\) in a neighbourhood
of the solution and \(M\) is an upper bound for \(\|(1/2) \ddot u(t)\|\), \(0\leq t\leq T\). In the present
 circumstances we have to take into account that, as \(\Omega\rightarrow \infty\), the length \(T=2\pi/ \Omega\) of
  the integration interval decreases and \(M\) grows like \(\Omega\), because
\[\ddot u = \frac{\partial f}{\partial u} \dot u+\frac{\partial f}{\partial v} \dot v+\Omega \frac{\partial f}{\partial t}.\]

From the elementary inequality \((\exp(LT)-1)/L \leq T\exp(LT)\) and the standard bound, we have
\[
\| u_{v}-u(\nu h)\| \leq T \exp(LT)M h,\qquad \nu = 0,\dots,\nu_{max},
\]
and therefore \eqref{eq:auxeuler} holds.

By adding all the Euler equations we find
\[
u_{\nu_{max}} = X + \sum_{\nu=0}^{\nu_{max}-1} h f(u_\nu,v(\nu h),\Omega \nu h),
\]
and from \eqref{eq:auxeuler},
\begin{equation}\label{eq:auxeuler1}
u_{v_{max}} = X +\sum_{\nu=0}^{\nu_{max}-1} h f(u(\nu h),v(\nu h),\Omega \nu h) +\mathcal{O}(hT),
\end{equation}
a relation that has to be compared with
\begin{equation}\label{eq:auxeuler2}
u(T) = X +\int_0^T f(u(s),v(s), \Omega s)\, ds.
\end{equation}

The bound \eqref{eq:auxeulerbis} will be established if we show that the quadrature sum in \eqref{eq:auxeuler1} approximates the integral in
\eqref{eq:auxeuler2} with errors of size \(\mathcal{O}(hT)\). To this end we decompose the function being integrated as
\[
f(u(s),v(s),\Omega s) = f(X,v(0),\Omega s) + \Big( f(u(s),v(s), \Omega s) -f(X,v(0),\Omega s)\Big) = f_1+f_2.
\]
It is easily seen (for instance by expanding in a Fourier series) that the total time derivative  \((d/dt)f_2\) remains bounded as \(\Omega\rightarrow \infty\); elementary
results then show that the quadrature of \(f_2\) has errors of the desired size \(\mathcal{O}(hT)\). On the other hand,
the time derivative of \(f_1\) grows like \(\Omega\), and quadrature errors of size \(\mathcal{O}(h)\) may be feared. Fortunately  the quadrature for \(f_1\) is actually exact, because one  checks by an explicit computation that it is exact for each Fourier mode
\(f_k(X,v(0)) \exp(ik\Omega s)\).

The third bound \eqref{eq:auxeulerter} is a trivial consequence of \eqref{eq:auxeulerbis}.
\end{proof}

It goes without saying that the corresponding
result holds for backward integrations with
\(-T\leq t\leq 0\). The following theorem
provides the main result of this paper.

\begin{theorem}\label{th:main2}
{\em
Assume that the problem \eqref{specficsys}, with the smoothness assumptions stated
in the preceding sections, is integrated with SAM. In addition assume that one of the following hypotheses
holds:
\begin{itemize}
\item (H1) The oscillatory Fourier coefficients \(f_k\), \(k\neq 0\) of \(f\) do not depend on the delayed argument
\(y\).
\item (H2) The delay  \(\tau\) is an integer multiple of the period \(T = 2\pi/\Omega\).
\end{itemize}

Then there exist constants \(\Omega_0\), \(H_0\), \(h_0\) and \(K\) such that, for \(\Omega\geq \Omega_0\) , \(H\leq H_0\), \(h\leq h_0\),
\(0\leq t_n=nH\leq t_{max}\), the difference between the numerical solution and the solution of the averaged problem
 may be bounded as follows
 \[
 \|X_n-X(t_n) \| \le K\left( H^2+\frac{H}{\Omega}+\frac{1}{\Omega^2}+h\right).
 \]
 In particular, if the grids are refined in such a way that \(h\) is taken proportional to \(H/\Omega\) (i.e.\ \(\nu_{max}\) is taken proportional to \(N\)), then the bound becomes
\begin{equation}\label{eq:b}
\|X_n-X(t_n) \| \le K^\prime\left( H^2+\frac{H}{\Omega}+\frac{1}{\Omega^2}\right).
\end{equation}
}
\end{theorem}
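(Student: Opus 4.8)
The plan is to reuse the stability-plus-residual framework already exploited in the proof of Theorem~\ref{th:main}. As there, I would apply Proposition~\ref{prop:stability} with \(\tilde X_n = X(t_n)\) (the exact averaged solution, whose residuals \(\tilde\rho_n\) against the macro-integrator are \(\mathcal{O}(H^2)\), dropping to \(\mathcal{O}(H)\) at the exceptional indices \(n=0,N,2N\)) and with \(\hat X_n = X_n\) now denoting the \emph{actual} SAM iterates, in which the slopes \(F_n\) are produced by \emph{Euler} micro-integration using the stored history. A routine contradiction argument, using the bound to be proved, keeps both sequences inside a fixed ball \(B\), so that the uniform Lipschitz constant of \(F\) is available. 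Everything then reduces to estimating the new residuals \(\hat\rho_n\), i.e.\ to controlling \(F_n-F^{(i)}(X_n,X_{n-N},X_{n-2N};\Omega)\) for \(i=1,2\).

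The decomposition I would use is
\[
F_n-F^{(i)}(X_n,\dots)=(F_n-F_n^\star)+(F_n^\star-\bar F_n)+\bigl(\bar F_n-F^{(i)}(X_n,\dots)\bigr),
\]
where \(\bar F_n\) is the exact-micro-integration slope of Theorem~\ref{th:main} (exact integration, exact history) and \(F_n^\star\) is the Euler slope computed with the \emph{exact} \(C^1\) history. The last bracket is precisely the consistency error of Proposition~\ref{prop:fourcases}: under (H1) or (H2) it is \(\mathcal{O}(1/\Omega^2)\) away from \(n=0,N\) and \(\mathcal{O}(1/\Omega)\) there, exactly as in Theorem~\ref{th:main}. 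For the middle bracket, writing the central difference as \(\tfrac12\bigl[(u_{\nu_{max}}-X)/T-(u_{-\nu_{max}}-X)/T\bigr]\) and invoking \eqref{eq:auxeulerter} together with its backward analogue gives \(\|F_n^\star-\bar F_n\|=\mathcal{O}(h)\). I would stress that this is the crux of the whole argument: the crude bound \eqref{eq:auxeuler} only yields \(\|u_{\nu_{max}}-u(T)\|=\mathcal{O}(h)\), which divided by \(2T\sim1/\Omega\) would produce a useless \(\mathcal{O}(h\Omega)\) slope error; the extra factor of \(\Omega\) gained by integrating over a whole period, recorded in \eqref{eq:auxeulerbis}, is exactly what rescues the estimate.

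The remaining bracket \(F_n-F_n^\star\) measures the effect of replacing the exact history by the stored Euler micro-solution \(v_{n,\nu}=u_{n-N,\nu}\), and this is the point at which the \(C^1\) hypothesis of the Proposition fails. Since \(u_{n-N,\nu}\) and the exact history share the initial value \(X_{n-N}\), their discrepancy is a one-period Euler error of size \(\mathcal{O}(h)\) by \eqref{eq:auxeuler}, possibly contaminated by the history discrepancy inherited from level \(n-N\). That inherited contribution, however, is damped: a history perturbation of size \(\delta\) changes the micro-solution over a single period by only \(\mathcal{O}(\delta/\Omega)\), because the forcing \((\partial f/\partial v)\,\delta\) acts over a window of length \(T\sim1/\Omega\) and the feedback through \(\partial f/\partial u\) is controlled by a one-period Gronwall estimate. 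Hence the recursion relating the stored-history error at level \(n\) to that at level \(n-N\) is contractive and the discrepancy stays uniformly \(\mathcal{O}(h)\) rather than accumulating across macro-steps; dividing by \(2T\) turns it into a slope perturbation \(\|F_n-F_n^\star\|=\mathcal{O}(h)\).

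Putting the three brackets together, the residual differences \(\hat\rho_n-\tilde\rho_n\) are \(\mathcal{O}(H^2+1/\Omega^2+h)\) at generic indices and \(\mathcal{O}(H+1/\Omega+h)\) (at \(n=0,N\)) or \(\mathcal{O}(H+1/\Omega^2+h)\) (at \(n=2N\)) at the finitely many exceptional indices, exactly as in Theorem~\ref{th:main} but with an extra \(\mathcal{O}(h)\) throughout. Feeding this into Proposition~\ref{prop:stability} and using that \(\sum_k H=\mathcal{O}(t_{max})=\mathcal{O}(1)\), while each exceptional index contributes only a single factor \(H\), yields the global estimate \(\|X_n-X(t_n)\|\le K(H^2+H/\Omega+1/\Omega^2+h)\). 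Finally, taking \(\nu_{max}\) proportional to \(N\) makes \(h=T/\nu_{max}\) proportional to \(H/\Omega\), so the \(h\)-term is absorbed into \(H/\Omega\) and \eqref{eq:b} follows. I expect the main obstacle to be the bookkeeping of the stored-history error together with the correct exploitation of the superconvergence \eqref{eq:auxeulerter}: it is the single factor of \(\Omega\) hidden in the short micro-window, and its cancellation over a full period, that governs whether the second-order behaviour in \(H\) survives the use of Euler's rule as micro-integrator.
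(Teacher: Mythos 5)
Your proposal is correct and follows essentially the same route as the paper: the stability-plus-residual framework of Theorem~\ref{th:main}, with the micro-integration contributing an extra \(\mathcal{O}(h)\) to each residual via the superconvergence bound \eqref{eq:auxeulerter}, and the stored-history errors for \(n>N\) controlled by the stability of the micro-integrator. Your explicit contraction recursion for the inherited history error (a perturbation of size \(\delta\) in the history affecting the one-period micro-solution only by \(\mathcal{O}(\delta/\Omega)\)) is a welcome elaboration of the step the paper dispatches with a one-sentence appeal to that stability.
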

\begin{proof}We argue as in Theorem~\ref{th:main}. Now in the residual for the numerical solution \(\{X_n\}\) we have to taken into account the micro-integration error. For \(n\leq N\), the bound \eqref{eq:auxeulerter} (and the corresponding bound for the backward integration) show that the micro-integration adds a term of size \(\mathcal{O}(h)\) to the residual. For \(n>N\) the situation is slightly more complicated, because the algorithm uses past values \(v_{n,\nu}\) that are themselves affected by micro-integration errors. However the stability of the micro-integrator guarantees
 that even when those errors are taken into account an estimate like \eqref{eq:auxeulerter} holds.
\end{proof}

We recall that taking \(h\)  proportional to
\(H/\Omega\) makes the complexity of the
algorithm independent of \(\Omega\).

\section{Numerical experiments}
\label{sec:numer}
\begin{table}[p]
\caption{Errors in $x_1$ for SAM with respect to the averaged solution  for problem \eqref{toggle}. Step points are not stroboscopic times}
\footnotesize
\vspace{-5mm}
\begin{center}
\resizebox{\textwidth}{!}{
\begin{tabular}{rcccccccccc}
\hline
N   &$\Omega=25$ &$\Omega=50$ & $\Omega=100$ & $\Omega=200$ & $\Omega=400$ & $\Omega=800$ & $\Omega=1600$ & $\Omega=3200$
\\ \hline 1&6.28(-2)&3.42(-2)&1.71(-2)& 7.87(-3) & 3.14(-3) & 1.66(-3) & 2.04(-3) &2.30(-3)\\ 2&***&7.66(-3)&3.74(-3)&1.66(-3)& 8.27(-4) & 7.56(-4) & 7.20(-4)  & 7.02(-4)\\ 4&***&***&1.11(-3)& 4.45(-4) &2.60(-4)& 2.20(-4) & 1.99(-4) &1.88(-4)\\ 8&***&***&***&1.80(-4)& 6.35(-5) & 5.57(-5) & 5.06(-5) & 4.77(-5)\\ 16&***&***&***&***& 3.20(-5) & 1.27(-5) & 1.22(-5) & 1.18(-5)\\32&***&***&***&***& *** & 6.31(-6) & 2.81(-6) &2.85(-6)\\ 64&***&***&***&***& *** & *** & 1.36(-6) &6.46(-7)\\ 128&***&***&***&***& *** & *** & *** &3.22(-7)\\
\hline
\end{tabular}}
\end{center}
\label{tab4}
\vspace{1cm}
\end{table}

\begin{figure}[p]
\vspace{-4cm}
\centering\includegraphics[scale=0.45]{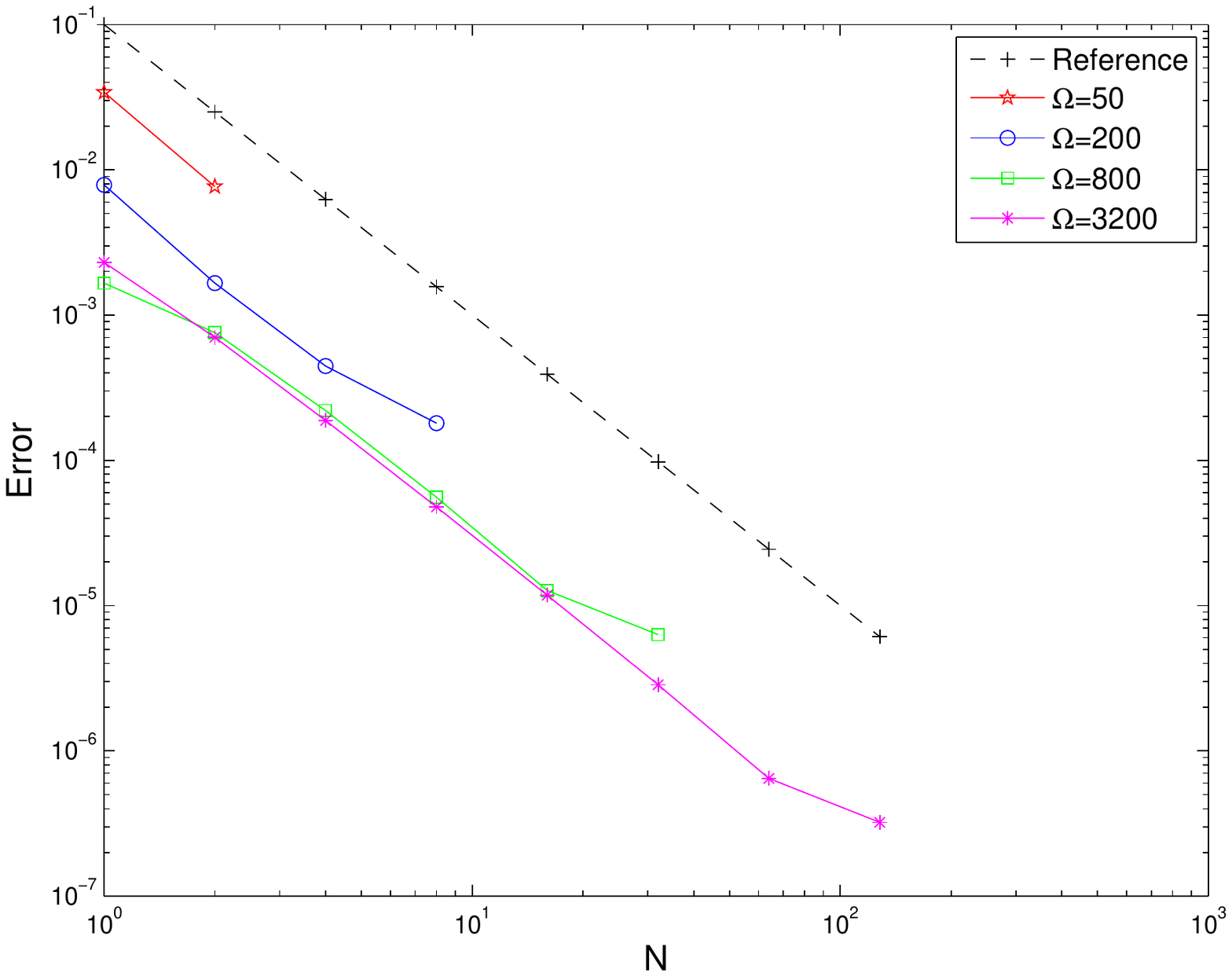}
\vspace{-3cm}
\caption{Error in the first component of the SAM solution with respect to the solution of the averaged
problem versus macro stepsize $H=\tau/N$ for some of the simulations in Table~\ref{tab4}.
Different lines correspond to difference values of $\Omega $ . Here  the reference line shows the \(N^{-2}\) behaviour}
\label{fig:err}
\end{figure}
 We now present experiments that illustrate the preceding theory.

 We first integrated with SAM the toggle switch problem \eqref{toggle} with $\alpha=2.5$, $\beta=2$,
$A=0.1$, $\omega=0.1$, $B=4.0$, and $\tau=0.5$. The function \(\varphi\) is constant: \( x_1(t) = 0.5\),
\(x_2(t) = 2.0\), \(-\tau \leq t\leq 0\), which corresponds to the system staying at an equilibrium up to time
\(t=0\) and then switching on the slow and fast oscillatory forcing terms. For the
 macro-stepsize we set \(H = \tau/N\), \(N=1,2,4,\dots\)
and the micro-stepsize was chosen as $h=T/(2N)$ (note that for the coarsest macro-step size \(N=1\), there are
only two Euler steps per period).
 Simulations took place in the interval $0\leq t\leq 2$ that includes the locations $t = 0$,
$t=\tau$ and $t=2\tau$ where the growth in the error of the SAM solution is more pronounced due to the lack of
smoothness. SAM has no difficulty in coping with the longer intervals required in practical simulations, but
we have chosen a short interval because on longer intervals it may be very expensive to obtain a sufficiently
accurate reference solution to measure errors; see in this connection the computational times quoted at the
end of Section~\ref{sec:extensions}.

Table~\ref{tab4} reports, for varying \(\Omega\) and \(N\), the maximum error, over $0\le t\le 2$,
in the \(X_1\) component of the SAM solution
with respect to the averaged solution obtained by
integrating \eqref{vraver} (this integration was carried out with the Matlab function dde23
with relative and absolute tolerances
$10^{-8}$ and $10^{-10}$ respectively).  The combinations of \(N\) and \(\Omega\) leading to values of \(H\) not significantly larger
than \(T\) were not attempted, as the HMM idea does not make sense for them.
Note that here \(\tau/T\) is irrational and therefore the step points \(t_n\) are not
stroboscopic times. Fig.~\ref{fig:err}
displays the errors in Table~\ref{tab4} as functions of \(N\); for clarity
not all values of \(\Omega\) are included.
By looking at the columns of the table (or at each of the four solid lines in the figure),
 we see that the error behaves as \(N^{-2}\), i.e\ as \(H^2\), except at the
bottom of each column, where the behaviour is as \(N^{-1}\).
This is of course the behaviour of the bound in \eqref{eq:b}.
Errors along rows saturate if \(\Omega\) is very large;
for those values one just observes the error in the macro-integration. This behaviour is also seen in the figure
by comparing points corresponding to the same value of \(N\) and varying \(\Omega\).
Along the main diagonal of the table,
errors  approximately divide by four, which is also in agreement with the bound \eqref{eq:b}. In
the figure this corresponds to observing the behaviour of the right-most point of each of the solid lines.

Table~\ref{tab2} differs from Table~\ref{tab4} in that now  \(\Omega\) is taken from the
sequence \(8\pi\), \(16\pi\), \dots\ that consists of values not very different from those in Table~\ref{tab4}.
In fact the errors in Table~\ref{tab2} are
very similar to those in Table~\ref{tab4}. However for the sequence \(8\pi\), \(16\pi\), \dots\  the step points are
stroboscopic times and it makes sense to compare the SAM solution with the true oscillatory solution. The
results are reported in Table~\ref{tab3}.
From Theorems~\ref{theorem1} and \ref{th:main2} the errors with respect to the true solution possess a
bound of the form \eqref{eq:b} and this is consistent with the data in the table.

\begin{table}[p]
\caption{Errors in $x_1$ for SAM with respect to the averaged solution  for problem \eqref{toggle}. Step points are stroboscopic times}
\footnotesize
\vspace{-5mm}
\begin{center}
\resizebox{\textwidth}{!}{
\begin{tabular}{rcccccccccc}
\hline
N   &$\Omega=8\pi$ &$\Omega=16\pi$ & $\Omega=32\pi$ & $\Omega=64\pi$ & $\Omega=128\pi$ & $\Omega=256\pi$ & $\Omega=512\pi$ & $\Omega=1024\pi$
\\ \hline 1&6.25(-2)&3.40(-2)&1.70(-2)& 7.82(-3) & 3.11(-3) & 1.66(-3) & 2.04(-3) &2.30(-3)\\ 2&***&7.62(-3)&3.72(-3)&1.65(-3)& 8.26(-4) & 7.56(-4) & 7.20(-4)  & 7.02(-4)\\ 4&***&***&1.11(-3)& 4.42(-4) &2.59(-4)& 2.20(-4) & 1.99(-4) &1.88(-4)\\ 8&***&***&***&1.78(-4)& 6.34(-5) & 5.57(-5) & 5.06(-5) & 4.77(-5)\\ 16&***&***&***&***& 3.16(-5) & 1.26(-5) & 1.22(-5) & 1.18(-5)\\32&***&***&***&***& *** & 6.24(-6) & 2.80(-6) &2.85(-6)\\ 64&***&***&***&***& *** & *** & 1.35(-6) &6.47(-7)\\ 128&***&***&***&***& *** & *** & *** &3.18(-7)\\
\hline
\end{tabular}}
\end{center}
\label{tab2}
\end{table}
\begin{table}[p]
\caption{Errors in $x_1$ for SAM with respect to the true oscillatory solution for problem \eqref{toggle}. Step points are stroboscopic times}
\footnotesize
\vspace{-5mm}
\begin{center}
\resizebox{\textwidth}{!}{
\begin{tabular}{rcccccccccc}
\hline
N   &$\Omega=8\pi$ &$\Omega=16\pi$ & $\Omega=32\pi$ & $\Omega=64\pi$ & $\Omega=128\pi$ & $\Omega=256\pi$ & $\Omega=512\pi$ & $\Omega=1024\pi$
\\ \hline 1&6.10(-2)&3.30(-2)&1.66(-2)& 7.74(-3) & 3.09(-3) & 1.66(-3) & 2.04(-3) &2.30(-3)\\ 2&***&6.65(-3)&3.41(-3)&1.56(-3)& 8.31(-4) & 7.57(-4) & 7.20(-4)  & 7.02(-4)\\ 4&***&***&7.95(-4)& 3.56(-4) &2.63(-4)& 2.21(-4) & 1.99(-4) &1.88(-4)\\ 8&***&***&***&9.25(-5)& 6.62(-5) & 5.64(-5) & 5.07(-5) & 4.77(-5)\\ 16&***&***&***&***& 1.50(-5) & 1.34(-5) & 1.23(-5) & 1.18(-5)\\32&***&***&***&***& *** & 3.03(-6) & 2.95(-6) &2.88(-6)\\ 64&***&***&***&***& *** & *** & 6.44(-7) &6.76(-7)\\ 128&***&***&***&***& *** & *** & *** &1.43(-7)\\
\hline
\end{tabular}}
\end{center}
\label{tab3}
\end{table}

In order to check numerically that the hypotheses (H1)--(H2) are necessary to ensure a bound of the form
\eqref{eq:b}, we have considered the  simple scalar equation
\begin{equation}\label{newpro}
\dot{x}=y+(x-y)\sin(\Omega t)+\frac{y}{2}\cos(2\Omega t);
\end{equation}
this is an academic example where (H1) does not hold (recall that in all the systems from the literature cited
(H1) holds). The averaged version is
$$
\dot{X}=Y-\frac{1}{\Omega}Y,
$$
for $0\le t<\tau$ and
$$
\dot{X}=Y+\frac{1}{\Omega}\Big(\frac{Y}{2}-\frac{Z}{2}\Big)\sin(\Omega\tau)-\frac{1}{16\Omega}Z\sin(2\Omega \tau),
$$
for $t\ge \tau$.

We used $\tau = 0.5$, $H = \tau/N$, $h = T/(5N)$, a constant $\varphi = 0.1$ and, as before, measured errors
in the maximum norm for $0\leq t\leq 2$. Table~\ref{tab:h2}\footnote{For typographic reasons only, this table
and the next have one column less than those presented before. There is nothing unexpected in the results
corresponding to the omitted frequency \(\Omega = 1024\pi\) (or \(\Omega = 1024\pi+2\pi\)). } gives errors
when $\Omega$ is taken from the sequence $8\pi$, $16\pi$, \dots, so that the periods $T$ are $1/4$, $1/8$,
\dots Hypothesis (H1) does not hold, but (H2) does, so that Theorem~\ref{th:main2} may be applied. In fact an
$N^{-2}$ (or equivalently $\Omega^{-2}$) behaviour is  seen along the diagonals of the table.

\begin{table}
\caption{Errors between SAM solution and the averaged solution for problem \eqref{newpro}. (H2) holds}
\vspace{-5mm}
\begin{center}
\resizebox{\textwidth}{!}{
\begin{tabular}{lccccccccc}
\hline
N  &$\Omega=8\pi$ &$\Omega=16\pi$ & $\Omega=32\pi$ & $\Omega=64\pi$ & $\Omega=128\pi$
& $\Omega=256\pi$ & $\Omega=512\pi$
\\ 1&3.08(-2)&3.00(-2)& 2.97(-2) & 2.95(-2) & 2.95(-2) & 2.95(-2) &2.94(-2)
\\ 2&***&8.41(-3)&8.36(-3)&8.35(-3)& 8.34(-3) & 8.33(-3) & 8.33(-3)
\\ 4&***&***&2.21(-3)& 2.26(-3) & 2.28(-3) & 2.29(-3) &2.29(-3)
\\ 8&***&***&***& 5.64(-4) & 5.84(-4) & 5.91(-4) & 5.94(-4)
\\ 16&***&***&***& *** & 1.42(-4) & 1.48(-4) & 1.50(-4)
\\ 32&***&***&***& *** & ***& 3.57(-5) &3.73(-5)
\\ 64&***&***&***& *** & *** & *** &8.94(-6)\\
\hline
\end{tabular}}
\end{center}
\label{tab:h2}
\end{table}

We next slightly changed the frequencies and used the sequence $8\pi+\pi/64$, $16\pi+\pi/32$, \dots  (this
represents in increase of less than  $0.2\%$ in each frequency). Neither (H1) nor (H2) are fulfilled and
Theorem~\ref{th:main2} may not be applied. The results in Table~\ref{tab:noh2} show that, for $\Omega$ large
the second order behaviour along the main diagonal is lost, indicating that the error does not behave as in
the bound \eqref{eq:b}. Note the substantial difference between Tables~\ref{tab:h2} and \ref{tab:noh2} at
$N=64$, in spite of the very small relative change in the value of \(\Omega\).

\begin{table}
\caption{Errors in $x$ between SAM solution and the averaged solution for problem \eqref{newpro}.
 Neither (H1) nor (H2) hold}
\vspace{-5mm}
\begin{center}
\resizebox{\textwidth}{!}{
\begin{tabular}{lcccccccc}
\hline
N  &$\Omega=8\pi+\frac{\pi}{64}$ &$\Omega=16\pi+\frac{\pi}{32}$ & $\Omega=32\pi+\frac{\pi}{16}$ & $\Omega=64\pi+\frac{\pi}{8}$ & $\Omega=128\pi+\frac{\pi}{4}$ & $\Omega=256\pi+\frac{\pi}{2}$ & $\Omega=512\pi+\pi$
\\ 1&3.08(-2)&3.00(-2)& 2.97(-2) & 2.95(-2) & 2.95(-2) & 2.95(-2) &2.95(-2)
\\ 2&***&8.43(-3)&8.38(-3)&8.36(-3)& 8.36(-3) & 8.36(-3) & 8.36(-3)  \\ 4&***&***&2.23(-3)& 2.28(-3) & 2.30(-3) & 2.31(-3) &2.32(-3) \\ 8&***&***&***& 5.78(-4) & 6.00(-4) & 6.12(-4) & 6.23(-4)\\ 16&***&***&***& *** & 1.58(-4) & 1.69(-4) & 1.79(-4)\\32&***&***&***& *** & ***& 5.63(-5) &6.60(-5)\\ 64&***&***&***& *** & *** & *** &3.76(-5)\\
\hline
\end{tabular}}
\end{center}
\label{tab:noh2}
\end{table}

\section{Extensions}
\label{sec:extensions}

\begin{table}
\caption{Errors in $x_1$ for SAM with respect to the averaged solution for problem \eqref{geneproblem}.}
\footnotesize
\vspace{-5mm}
\begin{center}
\resizebox{\textwidth}{!}{
\begin{tabular}{rcccccccccc}
\hline
N   &$\Omega=8\pi$ &$\Omega=16\pi$ & $\Omega=32\pi$ & $\Omega=64\pi$ & $\Omega=128\pi$ & $\Omega=256\pi$ & $\Omega=512\pi$ & $\Omega=1024\pi$
\\ \hline 1&4.10(-2)&4.09(-2)&4.08(-2)& 4.08(-2) & 4.08(-2) & 4.07(-2) & 4.07(-2) &4.07(-2)\\ 2&***&8.08(-3)&7.89(-3)&7.81(-3)& 7.77(-3) & 7.76(-3) & 7.75(-3)  & 7.75(-3)\\ 4&***&***&1.78(-3)&1.70(-3)& 1.67(-3) & 1.65(-3) & 1.64(-3) &1.64(-3)\\ 8&***&***&***&4.29(-4)& 4.08(-4) & 3.99(-4) & 3.99(-4) & 4.06(-4)\\ 16&***&***&***&***& 1.06(-4) & 1.01(-4) & 1.04(-4) & 1.07(-4)\\32&***&***&***&***& ***& 2.62(-5) & 2.50(-5) &2.68(-5)\\ 64&***&***&***&***& *** & *** & 6.53(-6) &6.47(-6)\\ 128&***&***&***&***& *** & *** & *** &1.63(-6)\\
\hline
\end{tabular}}
\end{center}
\label{tab5}
\end{table}

We finally consider the application of SAM to problems that are not of the form \eqref{specficsys}.
 The number of variants that may arise is very high and we restrict  the attention to reporting numerical
 results for a case study. The corresponding analysis may be carried out by adapting the proofs given in
  the preceding sections.

We study again the toggle switch problem, but now in an  alternative asymptotic regime. The system is given by
\begin{eqnarray}\label{geneproblem}
\frac{dx_1}{dt}&=&\frac{\alpha}{1+x_2^{\beta}}-x_1(t-\tau)+A\sin(\omega t)+\hat{B}\Omega\sin(\Omega t),\\ \nonumber
\frac{dx_2}{dt}&=&\frac{\alpha}{1+x_1^{\beta}}-x_2(t-\tau),
\end{eqnarray}
where \(\hat B\) is a constant and the other symbols are as before.
As \(\Omega\rightarrow \infty\), the variable \(x_1\) undergoes oscillations of frequency \(\Omega\) and \(\mathcal{O}(1)\) amplitude, which, for \(\Omega\) large, makes the direct numerical integration of the system more expensive  than that of
\eqref{toggle} (the amplitude there is \(\mathcal{O}(1/\Omega)\)).
For an analytic treatment, we begin by performing, for \(t\geq 0\), the preliminary stroboscopic change of variables
\begin{eqnarray*}
x_1&=&X_1+\hat{B}(1-\cos(\Omega t)),\\
x_2&=&X_2,
\end{eqnarray*}
which differs from the identity in \(\mathcal{O}(1)\) quantities. This leads to:
\begin{eqnarray}\label{geneproblem2}
\frac{dX_1}{dt}&=&\frac{\alpha}{1+X_2^{\beta}}-X_1(t-\tau)-\hat B\,1_{\{t\geq \tau\}}+A\sin(\omega t),\\ \nonumber
\frac{dX_2}{dt}&=&\frac{\alpha}{1+\big(X_1+\hat{B}(1-\cos(\Omega t))\big)^\beta}-X_2(t-\tau).
\end{eqnarray}
The highly oscillatory forcing has been reduced from \(\mathcal{O}(\Omega)\) to \(\mathcal{O}(1)\) and, in principle, it is possible to average \eqref{geneproblem2} by the techniques used to deal with \eqref{specficsys}. Unfortunately, finding the required Fourier coefficients in closed form does not appear to be possible in general. In the particular case
where \(\beta = 2\), the \(0\)-Fourier coefficient may be found by evaluating the relevant integral \eqref{eq:integral} with the help of the residue theorem. This leads to the averaged system of the form \(\dot X = f_0\) explicitly given by
\begin{eqnarray}\label{averagesys}
\frac{dX_1}{dt}&=&\frac{\alpha}{1+X_2^{2}}-X_1(t-\tau)-\hat{B}\,1_{\{t\geq \tau\}}+A\sin(\omega t),\\ \nonumber
\frac{dX_2}{dt}&=&\alpha\frac{\sqrt{-\frac{M}{2}+\frac{\sqrt{N}}{2}}+(X_1+\hat{B})\sqrt{\frac{M}{2}+\frac{\sqrt{N}}{2}}}
{\left(\frac{M}{2}+\frac{\sqrt{N}}{2}\right)^2+(X_1+\hat{B})^2+\sqrt{N}}-X_2(t-\tau),
\end{eqnarray}
with $M=X_1^2+2\hat{B}X_1-1$,
$N=M^2+4(X_1+\hat{B})^2$, whose solutions
approximate \(x\) with errors of size at least
\(\mathcal{O}(1/\Omega)\) at stroboscopic times.
However, since \eqref{geneproblem2} is even in
\(\Omega\), in this particular case the errors
are actually \(\mathcal{O}(1/\Omega^2)\).

Table~\ref{tab5} presents the errors in the SAM solution measured with respect to the solution of
\eqref{averagesys}. The  experiments have \(\hat B = 0.1\); all other details are as in the toggle switch
simulations in the preceding section. The table shows that the performance of SAM is very similar to that
encountered in problem \eqref{toggle}. We also measured errors with respect to the oscillatory solution and
found that they are very close to those reported here, i.e.\ the situation is similar to that seen when
comparing Tables~\ref{tab2} and \ref{tab3}.

Finally we mention that, for the choice of
constants considered here, the integration (in
the interval \(0\leq t\leq 2\)) of the
oscillatory problem for  \(\Omega=1024\pi\) with
dde23 in a laptop computer took more than 9,000
seconds. The corresponding SAM solution with the
smallest value of \(H\) took approximately one
second. Since as pointed out before, the study of
vibrational resonance requires integrations in
time intervals two orders of magnitude larger
than \(0\leq t\leq 2\), for many choices of the
values of the constants that appear in the model,
it is clear that a direct numerical integration of the
oscillatory problem is not feasible for large
values of \(\Omega\).
\bigskip

{\bf Acknowledgements.}  J.M. Sanz-Serna has been supported by projects MTM2013-46553-C3-1-P from Ministerio
de Econom\'{\i}a y Comercio, and MTM2016-77660-P(AEI/\-FEDER, UE) from Ministerio de Eco\-nom\'{\i}a,
Industria  y Competitividad, Spain. Beibei Zhu has been supported by the National Natural Science Foundation
of China (Grant No. 11371357 and No. 11771438). She is grateful to Universidad Carlos III de Madrid for
hosting the stay in Spain that made this work possible and to the Chinese Scholarship Council for providing
the necessary funds. The authors are thankful to M. A. F. Sanju\'{a}n and A. Daza for bringing to their
attention the vibrational resonance phenomenon, the toggle switch problem and other highly-oscillatory systems
with delay.

\bibliography{IMANUM-refs}

\begin{thebibliography}{10}
%
\bibitem[Ariel et al.(2009)Ariel et al.]
{Ariel}
\textsc{Ariel, G., Engquist, B. \& Tsai, R.} (2009)
\newblock A multiscale method for highly oscillatory ordinary differential equations with resonance.
\newblock \emph{Math. Comput.} \textbf{78}, 929--956.
%
\bibitem[Boutle et al.(2007)Boutle et al.]
{Boutle}
\textsc{Boutle, I,  Taylor, R. H. S. \& R\"{o}mer, R. A.} (2007)
\newblock{El Ni\~{n}o and the delay action oscillator}
\newblock \emph{Am. J. Phys.} \textbf{75}, 15--24.
%
\bibitem[Calvo et al.(2011a)Calvo et al.]
{CCMS1}
\textsc{Calvo, M. P., Chartier, Ph., Murua A. \& Sanz-Serna, J. M.} (2011a)
\newblock Numerical stroboscopic averaging for ODEs and DAEs.
\newblock \emph{Appl. Numer. Math.} \textbf{61}, 1077--1095.
%
\bibitem[Calvo et al.(2011b)Calvo et al.]
{CCMS2}
\textsc{Calvo, M. P., Chartier, Ph., Murua, A. \& Sanz-Serna, J. M.} (2011b)
\newblock A stroboscopic method for highly oscillatory problems.
\newblock \emph{Numerical Analysis and Multiscale Computations} (B. Engquist \& O. Runborg \& R. Tsai eds).
\newblock New York: Springer, pp. 73--87.
%
\bibitem[Calvo \& Sanz-Serna(2009)Calvo \& Sanz-Serna]
{CS1}
\textsc{Calvo, M. P. \& Sanz-Serna, J. M.} (2009)
\newblock Instabilities and inaccuracies in the integration of highly oscillatory problems.
\newblock \emph{SIAM J. Sci. Comput.} \textbf{31}, 1653--1677.
%
\bibitem[Calvo \& Sanz-Serna(2010)Calvo \& Sanz-Serna]
{CS2}
\textsc{Calvo, M. P. \& Sanz-Serna, J. M.} (2010)
\newblock Heterogeneous Multiscale Methods for mechanical systems with vibrations.
\newblock \emph{SIAM J. Sci. Comput.} \textbf{32}, 2029--2046.
%
\bibitem[Chartier et al.(2010)Chartier et al.]
{Chartier}
\textsc{Chartier, Ph., Murua, A. \& Sanz-Serna, J. M.} (2010)
\newblock Higher-order averaging, formal series and numerical integration I: B-series.
\newblock \emph{Found. Comput. Math.} \textbf{10}, 695--727.
%
\bibitem[Chartier et al.(2012)Chartier et al.]
{Murua}
\textsc{Chartier, Ph., Murua, A. \& Sanz-Serna, J. M.} (2012)
\newblock Higher-order averaging, formal series and numerical integration II: the quasi-periodic case.
\newblock \emph{Found. Comput. Math.} \textbf{12}, 471--508.
%
\bibitem[Chartier et al.(2015)Chartier et al.]
{Sanz-Serna}
\textsc{Chartier, Ph., Murua, A. \& Sanz-Serna, J. M.} (2015)
\newblock Higher-order averaging, formal series and numerical integration III: error bounds.
\newblock \emph{Found. Comput. Math.} \textbf{15}, 591--612.
%
\bibitem[Coccolo et al. (2018)Coccolo et al.]
{Coccolo}
\textsc{Coccolo, M., Zhu, B., Sanju\'{a}n, M. A. F. \& Sanz-Serna, J. M.} (2018)
\newblock Bogdanov-Takens resonance in time-delayed systems.
\newblock \emph{Nonlinear Dynamics} to appear.
%
\bibitem[Daza et al.(2013a)Daza et al.]
{Daza}
\textsc{Daza, A., Wagemakers, A., Rajasekar, S. \& Sanju\'an, M. A. F.} (2013a)
\newblock Vibrational resonance in a time-delayed genetic toggle switch.
\newblock \emph{Commun. Nonlinear Sci. Numer. Simul.} \textbf{18}, 411--416.
%
\bibitem[Daza et al.(2013b)Daza et al.]
{Daza2}
\textsc{Daza, A., Wagemakers, A. \& Sanju\'an, M. A. F.} (2013b)
\newblock Strong sensitivity of the vibrational resonance induced by fractal structures.
\newblock \emph{Int. J. Bifurcation Chaos} \textbf{23}, 1350129.
%
\bibitem[Daza et al.(2017)Daza et al.]
{Daza3}
\textsc{Daza, A., Wagemakers, A. \& Sanju\'an, M. A. F.} (2017)
\newblock Wada property in systems with delay.
\newblock \emph{Commun. Nonlinear Sci. Numer. Simul.} \textbf{43}, 220--226.
\bibitem[E(2003)E]
{E2003}
\textsc{E, W.} (2003)
\newblock Analysis of the heterogeneous multiscale method for ordinary differential equations.
\newblock \emph{Comm. Math. Sci.} \textbf{1}, 423--436.
%
\bibitem[E \& Engquist(2003)E \& Engquist]
{EEngquist}
\textsc{E, W. \& Engquist, B.} (2003)
\newblock The heterogeneous multiscale methods.
\newblock \emph{Comm. Math. Sci.} \textbf{1}, 87--132.
%
\bibitem[E et al.(2007)E et al.]
{E2007}
\textsc{E, W., Engquist, B., Li, X., Ren, W. \& Vanden-Eijnden, E.} (2007)
\newblock The hetergenous multiscale method: a review.
\newblock \emph{Commun. Comput. Phys.} \textbf{2}, 367--450.
%
\bibitem[Engquist \& Tsai(2005)Engquist \& Tsai]
{Engquist}
\textsc{Engquist, B. \& Tsai, Y. H.} (2005)
\newblock Hetergenous multiscale methods for stiff ordinary differential equations.
\newblock \emph{Math. Comput.} \textbf{74}, 1707--1742.
%
\bibitem[Gardner et al.(2000)Gardner et al.]
{Gardner}
\textsc{Gardner, T. S., Cantor, C. R. \& Collins, J. J.} (2000)
\newblock Construction of a genetic toggle switch in Escherichia coli.
\newblock \emph{Nature}, \textbf{403}, 393--342.
%
\bibitem[Hairer et al.(2002)Hairer et al.]
{HLW}
\textsc{Hairer, E., Lubich, Ch. \& Wanner, G.}(2002)
\newblock \emph{Geometric Numerical Integration: Structure-Preserving Algorithms for Ordinary Differential Equations}.
\newblock New York: Springer.
%
\bibitem[Iserles \& N{\o}rsett(2005)Iserles \& N{\o}rsett]
{arieh}
 \textsc{Iserles, A. \& N{\o}rsett, S. P.}(2005)
\newblock Efficient quadrature of highly oscillatory integrals using derivatives.
\newblock \emph{Proc. R. Soc. A.} \textbf{461}, 1383--1399.
%
\bibitem[Jeevarathinam et al.(2011)Jeevarathinam et al.]
{Jee}
\textsc{Jeevarathinam, C., Rajasekar, S. \& Sanju\'{a}n, M. A. F.} (2011)
\newblock Theory and numerics of vibrational resonance in Duffing oscillators with time-delayed feedback.
\newblock \emph{Phys. Rev. E} \textbf{83}, 066205.
%
\bibitem[Landa \& McClintock(2000)Landa \& McClintock]
{Landa}
\textsc{Landa, P. S. \& McClintock, P. V. E.} (2000)
\newblock Vibrational resonance.
\newblock \emph{J. Phys. A: Math. Gen.} \textbf{33}, L433.
%
\bibitem[Lehman \& Weibel(1999) Lehman \& Weibel]
{Lehman}
\textsc{Lehman, B. \& Weibel, S. P.}(1999)
\newblock Fundamental theorems of averaging for functional differential equations.
\newblock \emph{J. Diff. Eqns.} \textbf{152}, 160--190.
%
\bibitem[Li et al.(2007)Li et al.]
{Li}
\textsc{Li, J., Kevrekidis, P. G., Gear, C. W. \& Kevrekidis, I. G.} (2007)
\newblock Deciding the nature of the coarse equation through microscopic simulations: the baby-bathwater scheme.
\newblock \emph{SIAM Rev.} \textbf{49}, 469--487.
%
\bibitem[Sanz-Serna(2009)Sanz-Serna]
{Sanz-Sernamod}
\textsc{Sanz-Serna, J. M.} (2009)
\newblock Modulated Fourier expansions and heterogeneous multiscale methods.
\newblock \emph{IMA J. Numer. Anal.} \textbf{29}, 595--605.
\end{thebibliography}


\end{document}